\begin{document}
\title[A non-local problem  \dots]
{A non-local problem for the fractional order Rayleigh-Stokes equation}

\author[Ravshan Ashurov, Oqila Mukhiddinova, Sabir Umarov,  \hfil \hfilneg] {Ravshan Ashurov, Oqila Mukhiddinova, Sabir Umarov}  

\address{Ravshan Ashurov \newline
V.I. Romanovskiy Institute of Mathematics,\\
Uzbekistan Academy of Science, \\
University str.,9, Olmazor district,\\
Tashkent, 100174, Uzbekistan}
\email{ashurovr@gmail.com}

\address{Oqila Mukhiddinova \newline
	Tashkent University of Information Technologies,\\
	108 Amir Temur Avenue, \\
	Tashkent, 100200, Uzbekistan \\
\\}
\email{oqila1992@mail.ru}

\address{Sabir Umarov \newline
University of New Haven,  \\
Department of Mathematics\\
300 Boston Post Road\\
West Haven, CT 06516,  USA\\
}
\email{sumarov@newhaven.edu}

\subjclass[2000]{} \keywords{The Rayleigh-Stokes problem, non-local problem,
	fractional derivative, Mittag-Leffler function, Fourier method}
\begin{abstract}A nonlocal boundary value problem for the fractional version of the well known in fluid dynamics Rayleigh-Stokes equation is studied. Namely, the condition $u(x,T)=\beta u(x,0)+\varphi(x)$, where $\beta $ is an arbitrary real number, is proposed instead of the initial condition. If $\beta=0$, then we get the inverse problem in time, called the backward problem. It is well known that the backward problem is  ill-posed in the sense of Hadamard. If $\beta=1$, then the corresponding non-local problem becomes well-posed in the sense of Hadamard, and moreover, in this case a coercive estimate for the solution can be established.  The aim of this work is to find values of the parameter $\beta$, which separates two types of behavior of the semi-backward problem under consideration. We prove the following statements: if $\beta \ge 1,$ or $\beta<0$, then the problem is well-posed; if $\beta\in (0,1)$, then depending on the eigenvalues of the elliptic part of the equation, for the existence of a solution an additional condition on orthogonality of the right-hand side of the equation and the boundary function to some eigenfunctions of the corresponding elliptic operator may emerge.
\end{abstract}

\maketitle \numberwithin{equation}{section}
\newtheorem{theorem}{Theorem}[section]
\newtheorem{corollary}[theorem]{Corollary}
\newtheorem{lemma}[theorem]{Lemma}
\newtheorem{remark}[theorem]{Remark}
\newtheorem{problem}[theorem]{Problem}
\newtheorem{example}[theorem]{Example}
\newtheorem{definition}[theorem]{Definition}
\allowdisplaybreaks

\section{Introduction}

Fractional derivatives serve as an essential tools in modeling of complex processes. The concept of fractional derivatives arose simultaneously with derivatives of integer order. Starting with the work of Abel (see, e.g. \cite{KilSriTru}), the concept of fractional derivatives began to be widely used in various fields, such as electrochemistry, neuron models in biology, applied mathematics, fluid dynamics, viscoelasticity and fluid mechanics \cite{Kulish}. Models with fractional derivatives are used to analyze viscoelasticity, for example, of polymers during glass transition and in the glassy state \cite{Debnath}, the theoretical base of which is the well-known Rayleygh-Stokes equation. A fractional model of a generalized second-class fluid flow can be represented as the Rayleigh-Stokes problem with a time-fractional derivative \cite{Bazh}:
\begin{equation}\label{probIN}
	\left\{
	\begin{aligned}
		&\partial_t u(x,t)  -(1+\gamma\, \partial_t^\alpha)\Delta u(x,t) = f(x, t),\quad x\in \Omega, \quad 0< t \leq T;\\
		&u(x, t) = 0, \quad x\in \partial \Omega, \quad 0 < t \leq T;\\
		&u(x, 0)= \varphi(x), \quad x\in \Omega,
	\end{aligned}
	\right.
\end{equation}
where $1/\gamma>0$ is the fluid density, a fixed constant, the source term $f(x,t)$ and the initial data $\varphi$ are given functions, $\partial_t= \partial/\partial t$, and $\partial_t^\alpha$ is the Riemann-Liouville fractional derivative of order $\alpha \in (0,1)$ defined by (see, e.g. \cite{KilSriTru}):
\begin{equation}\label{RL}
	\partial_t^\alpha h(t)= \frac{d}{dt} \int\limits_0^t \omega_{1-\alpha}(t-s) h(s)ds, \quad \omega_{\alpha}(t)=\frac{t^{\alpha-1}}{\Gamma(\alpha)}.
\end{equation}
Here $\Gamma(\cdot)$ is Euler's gamma function. Usually problem (\ref{probIN}) is considered in the domain $\Omega \subset R^N$, $N=1,2,3$, and for $N>1$ it is assumed that the boundary $\partial \Omega$ of  $\Omega$ is sufficiently smooth.

When $\alpha=1$ the equation in (\ref{probIN})  is also called the Haller equation. This equation is a mathematical model of water movement in capillary-porous media, including the soil. In this case, $u$ is humidity in fractions of a unit, $x$ is a point inside the soil, $t$ is time  (see, for example, in \cite{Chud}, formulas (1.4) and (1.84) on
p. 137 and 158, \cite{Nakh1}, formula (9.6.4) on p. 255, and \cite{Nakh2}, formula (2.6.1) on p. 59).  See also works \cite{Tan1}, \cite{Tan2}, where on the base of modified Darcy's law for a viscoelastic fluid, the first Stokes problem was extended to the problem for an Oldroyd-B fluid in a porous half-space, and equation (\ref{probIN}) was obtained as a mathematical model. Recall that usually Stokes' first problem describes flows caused by a suddenly accelerated plate for homogeneous incompressible isotropic fluids with pressure-dependent viscosity.

The Rayleigh-Stokes problem (\ref{probIN}) plays an important role in the study of the behavior of some non-Newtonian fluids, as well. A non-Newtonian fluid is a fluid that has a constant viscosity independent of stress, i.e. does not obey Newton's law of viscosity. The fractional derivative $\partial_t^\alpha$ is used in the equation (\ref{probIN}) to describe the viscoelastic flow behavior (see, for example, \cite{Fet}, \cite{Shen}).

In recent years, the Rayleigh-Stokes problem (\ref{probIN}) has received much attention due to its importance for applications (see, for example, \cite{Tan1} - \cite{AshurovVaisova}). An overview of work in this direction can be found in Bazhlekova et al. \cite{Bazh} (see also \cite{AshurovVaisova}). Properties of the solution of this model were studied by a number of authors applying various methods; see e.g. \cite{Fet}, \cite{Shen}, \cite{Zhao}. The authors of the work by Bazhlekova et al. \cite{Bazh} proved the Sobolev regularity of the homogeneous Rayleigh-Stokes problem for both smooth and non-smooth initial data $\varphi(x)$, including $\varphi(x) \in L_2(\Omega)$.
A number of works are devoted to the development of efficient and accurate numerical algorithms for solving the problem (\ref{probIN}). A survey of works in this direction is contained in the above-mentioned paper \cite{Bazh}. See also recent articles \cite{Le}, \cite{Dai}, and references therein.

The study of the inverse problem of determining the right-hand side of the Rayleigh-Stokes equation is the subject of many works (see, for example, \cite{Tran1}, \cite{Tran2}, \cite{Duc} and the bibliography cited there). Since this inverse problem is ill-posed in the sense of Hadamard, various regularization methods are considered in the above works, as well as numerical methods for finding the right-hand side of the equation are proposed.
We note  also that the inverse problem of determining the right-hand side of the equation is also ill-posed for the subdiffusion equation (see, for example, \cite{Kirane_f_1}, \cite{Kirane_f_2}, \cite{AshMuk}).

If the initial condition $u(x, 0)= \varphi(x)$ in the problem (\ref{probIN}) is replaced by $u(x, T)= \varphi(x)$, then one gets the, so-called, \emph{backward problem}. This problem is not well-posed, i.e. a small change in $u(x,T)$ leads to a large change of the solution. In papers \cite{Luc1}, \cite{Luc2} (see also references therein) various regularization methods are proposed, accompanied by verification of these methods using numerical experiments. We emphasize that in these papers $N<4,$ and this is connected with the method used there. Namely, if the dimension of the space is less than four, then for the eigenvalues $\lambda_k$ of the Laplace operator with the Direchlet condition, the series
\[
\sum_k \lambda_k^{-2}
\]
converges.

Let us focus, in more detail, on the recently published work \cite{AshurovVaisova}. In this paper, along with other questions, problem (\ref{probIN})  is investigated by taking the non-local condition $u(x, T)=\beta u(x,0) + \varphi(x)$ instead of the initial condition. The authors considered only the cases $\beta=0$ and $\beta=1$: if $\beta=0$ then we have the backward problem (note that here the dimension $N$ is arbitrary). The authors proved that if $\beta=0$, then the solution exists and is unique, but there is no stability. If $\beta=1$, then the problem is well-posed in the sense of Hadamard, i.e. the unique solution exists, and the solution continuously depends on the initial data and on the right-hand side of the equation.

The question naturally arises: what happens if $\beta$ takes other values than $0$ and $1$? In the present paper we consider a more general non-local condition  $u(x, t_0)=\beta u(x,0) + \varphi(x)$, $t_0\in (0,  T]$ and  provide a definitive answer to this question. The
main results of the current work can be formulated as follows:

1)  If $\beta \ge 1$, or $\beta<0$,  then the problem is well-posed in the sense of Hadamard: the solution exists, it is unique and stable;

2) case $\beta=0$ is considered in \cite{AshurovVaisova}: in this case there is a unique solution, but it is not stable;

3) if $\beta\in (0,1)$, then the well-posedness of the problem depends on the location of the spectrum (i.e., the eigenvalues $\lambda_k$) of the Laplace operator with the Direchlet condition. If the  inequality $B_\alpha(\lambda_k,t_0)\neq \beta$ (the definition of this function is given in Section 3) holds for all $k=1,2,\cdots$, then the problem is well-posed in the sense of Hadamard. If $B_\alpha(\lambda_k,t_0)=\beta$ for some $k\in K_0$ (it is proved in the paper that the set $K_0$ contains only a finite number of points), then a necessary and sufficient condition for the existence of a solution is found. However, in this case there is no unique solution.

In what concerns  the non-local condition
\[
u(x, T)=\beta u(x,0) + \varphi(x),
\] in the variable $t,$
the corresponding problem with the parameter $\beta=1$ for the classical diffusion equation was first considered in \cite{Ashyr}, \cite{AshyrSob} and \cite{AshyrSob1}.  In \cite{AF2022} and \cite{AF_1_2022}, this problem with an arbitrary parameter $\beta$ was studied in detail for subdiffusion equations with Riemann-Liouville and Caputo derivatives correspondingly. In a recent paper \cite{Tran}, the authors considered the subdiffusion equation with the Caputo-Fabrizio derivative  on an $N$-dimensional torus with a non-local condition
\[
\varepsilon u(T)= u(0)+ \varphi.
\]
In these works the cases $\varepsilon=0$ and $\varepsilon>0$ are studied separately. The authors also studied solution limit at $\varepsilon \to 0$. Note, in this paper if $\varepsilon=0$, then we have the Cauchy problem, whereas in our case we have the backward problem.

The present paper consists of five sections. Section 2 provides precise formulations of the problems
studied in this paper. In Section 3, we introduce the standard Hilbert space of "smooth" functions
via the power of an elliptic operator
and give some well-known properties of the function $B_\alpha(\lambda, t)$ introduced in \cite{Bazh}.
Here we prove an important lemma used for solution of the non-local problem in the variable $t.$
Section 4 is devoted to the study of the
main non-local problem  with operator $A$ generalizing the Laplace operator.

\section{Problem formulations}

Let $H$ be a separable Hilbert space. Denote by $(\cdot, \cdot)$ be the inner product
and by $||\cdot||$   the norm in $H$. Consider an arbitrary unbounded positive self-adjoint operator $A$ with a dense domain in $H$.
We assume that $A$ has a complete in $H$ system of orthonormal
eigenvectors (eigenfunctions) $\{v_k\}$ and a countable set of positive
eigenvalues
\[
\lambda_k: \quad 0<\lambda_1\leq\lambda_2 \cdot\cdot\cdot\rightarrow +\infty.
\]
We also assume that the set $\{\lambda_k\}$ does not have a finite limit point.

For a vector-valued functions $h: \mathbb{R}_+\rightarrow H$, we define the Riemann-Liouville fractional derivative of order $0<\alpha< 1$ in the same way as (\ref{RL}) (see, e.g. \cite{Liz}).
Finally, let $C((a,b); H)$ denote
the set of functions $u(t)$ continuous in $t\in (a,b)$
with values in $H$.

Consider the following non-local problem for the abstract Rayleigh-Stokes equation
accepting the integral (in the definition of the fractional derivative) in the sense of Bochner:
\begin{equation}\label{probMain}
	\left\{
	\begin{aligned}
		&\partial_t u(t)  + (1+\gamma\, \partial_t^\alpha)A u(t) = f(t),\quad 0< t \leq T;\\
		&u(t_0)=\beta u(0)+ \varphi,
	\end{aligned}
	\right.
\end{equation}
where $\gamma>0$ and $t_0\in (0, T]$ are fixed constants, $\varphi\in H,$
$f(t) \in C((0,T]; H),$ and $\beta$  is an arbitrary real number. If $\beta=0,$ then this problem is called \textit{the backward problem}.

\begin{definition}\label{def}
	A function $u(t)\in C([0,T]; H)$  is called a solution of the non-local Rayleigh-Stokes problem (\ref{probMain}), if
	\[
	\partial_t u(t), \,  A u(t), \, \partial_t^\alpha Au(t)\in C((0,T); H),
	\]
	and it satisfies conditions
	(\ref{probMain}) for all $t \in (0,T].$
\end{definition}

\begin{remark}\label{laplace}
	As an example of the operator $A$ one can take, for example, the Laplace operator with the Dirichlet condition in an arbitrary $N$ (not only $\leq 3$)-dimensional bounded domain with a sufficiently smooth boundary.
	This operator has all the properties listed above.
\end{remark}

\section{Preliminaries}

For a given real number $ \tau $, we define the operator $A^{\tau}$ by
$$
A^\tau h= \sum\limits_{k=1}^\infty \lambda_k^\tau h_k v_k.
$$
Note, that the operator $A$ is positive and therefore $\lambda_k>0$ for all $k.$ Here and everywhere
for the vector $h\in H$, the symbol $h_k$ will denote the Fourier coefficients of this vector: $h_k=(h, v_k)$. The domain of definition of the operator
$A^\tau$ is
$$
D(A^\tau)=\{h\in H:  \sum\limits_{k=1}^\infty \lambda_k^{2\tau}
|h_k|^2 < \infty\}.
$$
For elements of $h, g \in D(A^\tau)$ we introduce the norm
\[
||h||^2_\tau=\sum\limits_{k=1}^\infty \lambda_k^{2\tau} |h_k|^2 = ||A^\tau h||^2,
\]
and the inner product
\[
(h,g)_{\tau} = \sum_{k=1}^{\infty} \lambda_k^{2\tau} h_k g_k = (A^{\tau}h,A^{\tau}g).
\]
With this  inner product, the linear-vector space $D(A^\tau)$ becomes a Hilbert space.
Further, let $B_\alpha(\lambda, t)$ be a solution of the following Cauchy problem
\begin{equation}\label{B}
	L y(t)\equiv	y'(t)+\lambda (1+\gamma \partial_t^\alpha)y(t)=0, \,\, t>0,\,\,\lambda>0,\,\, y(0)=1.
\end{equation}
The solution of
this problem can be expressed in terms of the generalized Wright function (see e.g. A.A. Kilbas et. al \cite{KilSriTru}, Example 5.3, p. 289 and \cite{Pskhu}).
The function $B_\alpha(\lambda, t)$
is studied in detail in  Bazhlekova, Jin, Lazarov, and Zhou \cite{Bazh}. See also  Luc, Tuan, Kirane, Thanh \cite{Luc1}, where
important lower bounds are obtained.


The authors of \cite{Bazh}, in particular, proved the following lemma.

\begin{lemma}\label{Bazh} Let $B_\alpha(\lambda, t)$ be a solution of the Cauchy problem (\ref{B}). Then
	\begin{enumerate}
		\item
		$B_\alpha (\lambda, 0)=1,\,\, 0<B_\alpha (\lambda, t)<1,\,\, t>0$,
		\item$\partial_t B_\alpha (\lambda, t)<0, \,\, t\geq 0$,
		\item$\lambda B_\alpha (\lambda, t)< C \min \{t^{-1}, t^{\alpha-1}\}, \,\,t>0$,
		\item
		$\int\limits_0^T B_\alpha (\lambda, t) dt \leq \frac{1}{\lambda}, \,\, T>0.$
	\end{enumerate}
	
\end{lemma}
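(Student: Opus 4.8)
The plan is to obtain a concrete, manifestly nonnegative spectral representation of $B_\alpha(\lambda,\cdot)$ via the Laplace transform, and then read off items (1)--(4) by elementary estimates. Applying the Laplace transform in $t$ to (\ref{B}), and using that for a function $y$ bounded near $t=0$ the fractional integral $\int_0^t\omega_{1-\alpha}(t-s)y(s)\,ds$ tends to $0$ as $t\to0^+$ (so that the transform of $\partial_t^\alpha y$ equals $s^\alpha$ times the transform of $y$), the Cauchy problem (\ref{B}) reduces to the algebraic relation $(s+\lambda+\lambda\gamma s^\alpha)\widehat y(s)=1$, where $\widehat y$ denotes the Laplace transform of $y=B_\alpha(\lambda,\cdot)$; hence
\[
\widehat y(s)=\frac{1}{s+\lambda+\lambda\gamma s^\alpha}.
\]
Since $s\mapsto s$ and $s\mapsto s^\alpha$ with $0<\alpha<1$ are complete Bernstein functions and $\lambda>0$, the denominator is a complete Bernstein function that is analytic and zero-free off the ray $(-\infty,0]$, so its reciprocal is a Stieltjes function; concretely, collapsing the Bromwich contour onto that ray gives
\[
B_\alpha(\lambda,t)=\int_0^\infty e^{-rt}K(\lambda,r)\,dr,\qquad K(\lambda,r)=\frac{\gamma\sin(\alpha\pi)}{\pi}\cdot\frac{\lambda r^\alpha}{(r-\lambda-\lambda\gamma r^\alpha\cos(\alpha\pi))^2+(\lambda\gamma r^\alpha\sin(\alpha\pi))^2},
\]
and $K(\lambda,r)>0$ for every $r>0$ because $\sin(\alpha\pi)>0$. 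I expect the one genuinely delicate point to be the rigorous justification of this contour deformation: estimating $\widehat y$ on large circular arcs to apply Jordan's lemma, checking integrability of $K$ near the branch point (where $K(\lambda,r)=O(r^\alpha)$) and at infinity (where $K(\lambda,r)=O(r^{\alpha-2})$), and verifying a posteriori that the function so produced solves (\ref{B}) and is continuous on $[0,\infty)$; for the full argument we refer to \cite{Bazh}.

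Granting the representation, items (1) and (2) are immediate. For (1), $B_\alpha(\lambda,0)=1$ is the initial condition, while by monotone convergence $B_\alpha(\lambda,0^+)=\int_0^\infty K(\lambda,r)\,dr$, and this integral equals $1$ by continuity of the solution of (\ref{B}) at $t=0$ (equivalently, by the initial-value theorem, since $\lim_{s\to\infty}s(s+\lambda+\lambda\gamma s^\alpha)^{-1}=1$ because $\alpha<1$); then for $t>0$ one has $0<\int_0^\infty e^{-rt}K(\lambda,r)\,dr<\int_0^\infty K(\lambda,r)\,dr=1$ since $0<e^{-rt}<1$ on $(0,\infty)$ and $K>0$ there. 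For (2), differentiating under the integral sign gives $\partial_t B_\alpha(\lambda,t)=-\int_0^\infty r\,e^{-rt}K(\lambda,r)\,dr<0$ for every $t>0$ (the integral converging for $t>0$ thanks to the factor $e^{-rt}$), and as $t\to0^+$ the one-sided derivative tends to $-\infty<0$, so the claimed inequality holds for all $t\ge0$.

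It remains to derive (4) and then (3). Integrating the differential equation in (\ref{B}) over $(0,T)$ and using $\int_0^T\partial_t^\alpha B_\alpha(\lambda,t)\,dt=\int_0^T\omega_{1-\alpha}(T-s)B_\alpha(\lambda,s)\,ds$ together with $B_\alpha(\lambda,0)=1$, one obtains
\[
\lambda\int_0^T B_\alpha(\lambda,t)\,dt=1-B_\alpha(\lambda,T)-\lambda\gamma\int_0^T\omega_{1-\alpha}(T-s)B_\alpha(\lambda,s)\,ds<1,
\]
because $B_\alpha>0$ makes both subtracted terms positive; dividing by $\lambda$ yields (4). For (3), the bound $\lambda B_\alpha(\lambda,t)\le C\,t^{\alpha-1}$ comes from the pointwise estimate $\lambda K(\lambda,r)\le(\pi\gamma\sin(\alpha\pi))^{-1}r^{-\alpha}$ (retain only the $\sin$-term in the denominator) together with $\int_0^\infty e^{-rt}r^{-\alpha}\,dr=\Gamma(1-\alpha)\,t^{\alpha-1}$, while $\lambda B_\alpha(\lambda,t)\le t^{-1}$ follows from (4) and the monotonicity in (2), since $t\,B_\alpha(\lambda,t)\le\int_0^t B_\alpha(\lambda,s)\,ds\le\lambda^{-1}$. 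Taking $C=\max\{1,\Gamma(1-\alpha)/(\pi\gamma\sin(\alpha\pi))\}$, which depends only on $\alpha$ and $\gamma$, gives $\lambda B_\alpha(\lambda,t)<C\min\{t^{-1},t^{\alpha-1}\}$ for all $t>0$ and completes the proof.
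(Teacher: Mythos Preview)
Your proof is correct. Note, however, that the paper does not itself prove Lemma~\ref{Bazh}: it attributes the result to Bazhlekova, Jin, Lazarov, and Zhou \cite{Bazh} and simply quotes the integral representation (\ref{BInt})--(\ref{add1}) from that source. Your argument follows precisely this route---derive the Laplace transform $\widehat y(s)=(s+\lambda+\lambda\gamma s^\alpha)^{-1}$, deform the Bromwich contour onto the negative real axis to obtain the completely monotone representation $B_\alpha(\lambda,t)=\int_0^\infty e^{-rt}K(\lambda,r)\,dr$ with positive kernel $K$, and then read off (1)--(4) by elementary manipulations---which is exactly the strategy of \cite{Bazh}; your kernel $K(\lambda,r)$ coincides with the paper's $b_\alpha(\lambda,r)$. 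The individual steps (the initial-value theorem for $\int_0^\infty K\,dr=1$, integrating the ODE over $(0,T)$ for (4), the pointwise bound $\lambda K\le(\pi\gamma\sin\alpha\pi)^{-1}r^{-\alpha}$ combined with monotonicity and (4) for (3)) are all sound, and you are right to flag the contour-deformation as the one place requiring genuine care, deferring it to \cite{Bazh}.
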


The  function $B_\alpha (\lambda, t)$ has the representation
\cite{Bazh}
\begin{equation}\label{BInt}
	B_\alpha (\lambda, t)=\int\limits_0^\infty e^{-rt} b_\alpha(\lambda, r) dr,
\end{equation}
where
\begin{equation}
	\label{add1}
	b_\alpha(\lambda, r)=\frac{\gamma}{\pi} \frac{\lambda r^\alpha \sin \alpha \pi}{(-r+\lambda\gamma r^\alpha \cos \alpha \pi +\lambda)^2+(\lambda \gamma r^\alpha \sin \alpha \pi )^2}.
\end{equation}

\begin{lemma} (\cite{Bazh,AshurovVaisova})
	\label{BazhEquation} The Cauchy problem
	\begin{equation}\label{BCauchyIN}
		y'(t)+\lambda (1+\gamma \partial_t^\alpha)y(t)=f(t), \,\, t>0,\,\,\lambda>0,\,\, y(0)=y_0,
	\end{equation}
	has a unique solution, which has a representation
	\begin{equation}\label{BCauchySolutionIN}
		y(t)=y_0 B_\alpha(\lambda, t) + \int\limits_0^t B_\alpha(\lambda, t-\tau) f(\tau) d \tau.	
	\end{equation}
\end{lemma}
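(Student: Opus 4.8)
The plan is to prove existence by a direct Duhamel-type verification of the formula (\ref{BCauchySolutionIN}), and to prove uniqueness by reducing the homogeneous problem to a weakly singular Volterra integral equation of the second kind.

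For existence, write $y(t)=y_0 B_\alpha(\lambda,t)+Y(t)$ with $Y(t)=\int_0^t B_\alpha(\lambda,t-\tau)f(\tau)\,d\tau$. Since $y_0B_\alpha(\lambda,\cdot)$ solves the homogeneous equation $Ly=0$ by (\ref{B}) and $B_\alpha(\lambda,0)=1$ gives $y(0)=y_0$, it remains only to check that $LY(t)\equiv Y'(t)+\lambda(1+\gamma\partial_t^\alpha)Y(t)=f(t)$. Differentiating the convolution and using $B_\alpha(\lambda,0)=1$ gives $Y'(t)=f(t)+\int_0^t\partial_t B_\alpha(\lambda,t-\tau)f(\tau)\,d\tau$. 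For the fractional part one inserts the definition of $Y$ into $\partial_t^\alpha Y(t)=\frac{d}{dt}\int_0^t\omega_{1-\alpha}(t-s)Y(s)\,ds$, interchanges the order of integration, substitutes $\sigma=s-\tau$, and recognizes the inner integral as $g(t-\tau)$ with $g(\sigma)=\int_0^\sigma\omega_{1-\alpha}(\sigma-\rho)B_\alpha(\lambda,\rho)\,d\rho$; since $g(0)=0$, differentiation in $t$ passes under the outer integral and yields $\partial_t^\alpha Y(t)=\int_0^t g'(t-\tau)f(\tau)\,d\tau$, where $g'(\sigma)=\partial_\sigma^\alpha B_\alpha(\lambda,\sigma)$. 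Summing the three contributions, the kernel collapses to $\bigl(\partial_\sigma B_\alpha+\lambda B_\alpha+\lambda\gamma\,\partial_\sigma^\alpha B_\alpha\bigr)(\lambda,\sigma)$ evaluated at $\sigma=t-\tau$, which is $(LB_\alpha)(\lambda,t-\tau)=0$ by (\ref{B}) (here $L$ acts in the time variable of $B_\alpha$), so $LY(t)=f(t)$. Equivalently one may argue by the Laplace transform: (\ref{B}) gives $\widetilde{B}_\alpha(\lambda,s)=(s+\lambda+\lambda\gamma s^\alpha)^{-1}$, and the transformed equation (\ref{BCauchyIN}), with the boundary term of the Riemann--Liouville derivative vanishing, gives $\widetilde{y}(s)=(y_0+\widetilde{f}(s))\,\widetilde{B}_\alpha(\lambda,s)$; the convolution theorem then returns (\ref{BCauchySolutionIN}).

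For uniqueness, let $w=y_1-y_2$ be the difference of two solutions, so that $w$ solves the homogeneous equation with $w(0)=0$. Integrating it over $[0,t]$ and using $\int_0^t\partial_s^\alpha w(s)\,ds=\int_0^t\omega_{1-\alpha}(t-s)w(s)\,ds$ — the boundary term at $0$ vanishing because $w$ is continuous and $w(0)=0$ — one obtains the linear Volterra integral equation
\[
w(t)=-\lambda\int_0^t w(s)\,ds-\lambda\gamma\int_0^t\omega_{1-\alpha}(t-s)w(s)\,ds ,
\]
whose kernel is weakly singular ($0<\alpha<1$) and locally integrable. Iterating this identity (or invoking the Gronwall inequality for weakly singular kernels) forces $w\equiv0$ on $[0,T]$, which proves uniqueness.

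I expect the main technical work to lie in justifying the interchanges of integration order and of differentiation with the integral sign near $t=0$, where the kernels are singular. For this one uses that, by Lemma \ref{Bazh} and the representation (\ref{BInt})--(\ref{add1}), $B_\alpha(\lambda,\cdot)$ is bounded, $\partial_t B_\alpha(\lambda,\cdot)$ is $O(t^{\alpha-1})$ and hence locally integrable, and $g(\sigma)=\int_0^\sigma\omega_{1-\alpha}(\sigma-\rho)B_\alpha(\lambda,\rho)\,d\rho$ is $C^1$ away from $0$ and vanishes at $0$, combined with the continuity (and local integrability at $0$) of $f$. One must also verify that the candidate $y$ belongs to the regularity class of a solution, which again follows from the same estimates.
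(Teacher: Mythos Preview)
The paper does not actually prove this lemma; it merely states it with the citation \cite{Bazh,AshurovVaisova} and moves on. There is therefore no in-paper argument to compare against, and your proposal already supplies more than the paper does.

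Your approach is correct and standard. The Duhamel verification for existence and the reduction to a weakly singular Volterra equation for uniqueness are both sound. One minor inaccuracy: near $t=0$ the derivative $\partial_t B_\alpha(\lambda,t)$ is not $O(t^{\alpha-1})$ but rather $O(t^{-\alpha})$, since from the equation~(\ref{B}) one has $\partial_t B_\alpha=-\lambda B_\alpha-\lambda\gamma\,\partial_t^\alpha B_\alpha$ and the Riemann--Liouville derivative of a function with $B_\alpha(\lambda,0)=1$ carries the singular term $t^{-\alpha}/\Gamma(1-\alpha)$. This does not damage your argument, because $t^{-\alpha}$ is still locally integrable for $0<\alpha<1$ and all the interchanges you need remain justified.

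The Laplace-transform alternative you sketch is in fact the route taken in the cited reference \cite{Bazh}: there the identity $\widetilde{B}_\alpha(\lambda,s)=(s+\lambda\gamma s^\alpha+\lambda)^{-1}$ is established and inverted along a keyhole contour to obtain the representation (\ref{BInt})--(\ref{add1}), after which the variation-of-constants formula (\ref{BCauchySolutionIN}) follows from the convolution theorem. So if you wish to align with the literature, the Laplace argument is the shorter and more canonical path; your direct Duhamel computation is equivalent and perhaps more self-contained, at the cost of the bookkeeping with the double integral.
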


We will also need an estimate obtained in \cite{AshurovVaisova}  for the derivative of the function $B_\alpha (\lambda, t).$
In view of the importance of this assertion for our further considerations, we present it with a brief proof.
\begin{lemma}\label{B't}
	There is a constant $C>0$, such that
	\[
	|\partial_t B_\alpha (\lambda, t)| \leq \frac{C}{\lambda\,t^{2-\alpha} },\,\, t>0.
	\]
\end{lemma}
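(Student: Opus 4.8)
The plan is to differentiate the integral representation \eqref{BInt} under the integral sign, obtaining
\[
\partial_t B_\alpha(\lambda,t) = -\int_0^\infty r\, e^{-rt}\, b_\alpha(\lambda,r)\, dr,
\]
and then to estimate the right-hand side by splitting the $r$-integral at a threshold and bounding $b_\alpha(\lambda,r)$ carefully on each piece. First I would examine the denominator in \eqref{add1}: since $\alpha\in(0,1)$ we have $\sin\alpha\pi>0$, and the two terms $(-r+\lambda\gamma r^\alpha\cos\alpha\pi+\lambda)^2$ and $(\lambda\gamma r^\alpha\sin\alpha\pi)^2$ are both nonnegative, so one gets a clean lower bound $(\lambda\gamma r^\alpha\sin\alpha\pi)^2$ for the denominator, whence
\[
b_\alpha(\lambda,r) \le \frac{\gamma}{\pi}\cdot\frac{\lambda r^\alpha\sin\alpha\pi}{(\lambda\gamma r^\alpha\sin\alpha\pi)^2} = \frac{1}{\pi\gamma\sin\alpha\pi}\cdot\frac{1}{\lambda r^\alpha}.
\]
This is the key pointwise bound: it gives the crucial factor $1/\lambda$ and decay $r^{-\alpha}$ in $r$.

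With this in hand, the estimate is essentially a Gamma-integral computation. We have
\[
|\partial_t B_\alpha(\lambda,t)| \le \int_0^\infty r\, e^{-rt}\, b_\alpha(\lambda,r)\, dr \le \frac{1}{\pi\gamma\sin\alpha\pi}\cdot\frac{1}{\lambda}\int_0^\infty r^{1-\alpha} e^{-rt}\, dr.
\]
The substitution $s=rt$ turns the last integral into $t^{\alpha-2}\int_0^\infty s^{1-\alpha}e^{-s}\,ds = \Gamma(2-\alpha)\, t^{\alpha-2}$, which converges precisely because $1-\alpha>-1$. Collecting constants, $|\partial_t B_\alpha(\lambda,t)| \le \dfrac{\Gamma(2-\alpha)}{\pi\gamma\sin\alpha\pi}\cdot\dfrac{1}{\lambda\, t^{2-\alpha}}$, which is the claim with $C=\Gamma(2-\alpha)/(\pi\gamma\sin\alpha\pi)$.

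The main technical point to justify carefully is the differentiation under the integral sign in \eqref{BInt}: one must check that $\int_0^\infty r\, e^{-rt}\, b_\alpha(\lambda,r)\, dr$ converges locally uniformly in $t>0$, which follows from the same bound $b_\alpha(\lambda,r)\le C(\lambda,\alpha,\gamma)\, r^{-\alpha}$ near $r=\infty$ together with the behaviour $b_\alpha(\lambda,r)=O(r^\alpha)$ as $r\to 0^+$ (the denominator in \eqref{add1} tends to $\lambda^2\neq 0$), so that $r\, e^{-rt}\, b_\alpha(\lambda,r)$ is dominated near $0$ and decays exponentially near $\infty$ for $t$ in any compact subinterval of $(0,\infty)$. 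I do not expect a serious obstacle here; the only mild subtlety is that the bound degenerates as $\alpha\to 1^-$ through the factor $\sin\alpha\pi$, but this is irrelevant since $\alpha$ is fixed in $(0,1)$.
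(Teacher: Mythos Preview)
Your proof is correct and matches the paper's argument essentially line for line: differentiate \eqref{BInt}, drop the first square in the denominator of \eqref{add1} to obtain $b_\alpha(\lambda,r)\le (\pi\gamma\lambda\sin\alpha\pi)^{-1}r^{-\alpha}$, and evaluate the resulting integral via $s=rt$ to get the constant $C=\Gamma(2-\alpha)/(\pi\gamma\sin\alpha\pi)$. The only differences are cosmetic: you announce a splitting of the $r$-integral that you never actually need, and you add a justification of differentiation under the integral sign that the paper omits.
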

\begin{proof}
	Differentiating the function $B_{\alpha}(\lambda, t)$ defined in  (\ref{BInt}), we have
	\[
	\partial_t B_\alpha (\lambda, t)=-\int\limits_0^\infty r e^{-rt} b_\alpha(\lambda, r) dr.
	\]
	Therefore, in accordance with the definition of $b_\alpha(\lambda, r)$ in \eqref{add1}
	\begin{align*}
		|\partial_t B_\alpha (\lambda, t)| &\leq \frac{\gamma}{\pi} \int\limits_0^\infty
		\frac{\lambda r^\alpha \sin \alpha \pi}{(\lambda \gamma r^\alpha \sin \alpha \pi )^2}\,r  e^{-rt}  dr \\
		&= \frac{1}{\gamma\pi\lambda \sin \alpha \pi } \int\limits_0^\infty r^{1-\alpha} e^{-rt} dr.
	\end{align*}
	Now the change of the variable $\tau=rt$ implies
	\begin{align*}
		|\partial_t B_\alpha (\lambda, t)| &\leq 	\frac{t^{\alpha-2}}{\gamma\pi\lambda \sin \alpha \pi } \int\limits_0^\infty \tau^{1-\alpha} e^{-\tau} d\tau \\
		&=  \frac{t^{\alpha-2}}{\gamma\pi\lambda \sin \alpha \pi } \Gamma(2-\alpha)= \frac{C}{\lambda\,t^{2-\alpha} }.
	\end{align*}
\end{proof}

In what follows, $\lambda$ will be replaced by eigenvalues $\lambda_k$ of the operator $A$. The following important lower bound for $B_\alpha (\lambda_k, t)$ was obtained in  Luc, N.H., Tuan,
N.H., Kirane, M., Thanh, D.D.X \cite{Luc1}.
\begin{lemma}\label{BLower}The following estimate holds for all $t \in [0, T]$ and $k\geq 1$:
	\[
	B_\alpha (\lambda_k, t)\geq \frac{C(\alpha, \gamma, \lambda_1)}{\lambda_k},
	\]
	where
	\[
	C(\alpha, \gamma, \lambda_1)=\frac{\gamma \sin \alpha \pi}{4}\int\limits_0^\infty \frac{r^\alpha e^{-rT}}{\frac{r^2}{\lambda_1^2}+ \gamma^2 r^{2\alpha}+1} dr.
	\]
\end{lemma}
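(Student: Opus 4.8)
The plan is to use the integral representation \eqref{BInt}--\eqref{add1} together with a pointwise lower bound on the density $b_\alpha(\lambda_k,r)$. First note that for every $t\in[0,T]$ and $r\geq 0$ one has $e^{-rt}\geq e^{-rT}$, and since $b_\alpha(\lambda_k,r)\geq 0$,
\[
B_\alpha(\lambda_k,t)=\int_0^\infty e^{-rt}b_\alpha(\lambda_k,r)\,dr\geq\int_0^\infty e^{-rT}b_\alpha(\lambda_k,r)\,dr .
\]
Hence it suffices to bound $B_\alpha(\lambda_k,T)$ from below, which eliminates the $t$-dependence altogether.

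Next I would estimate the denominator of $b_\alpha(\lambda_k,r)$ from above. Using $|\cos\alpha\pi|\leq 1$, $\sin^2\alpha\pi\leq 1$, the triangle inequality, and $(a+b+c)^2\leq 3(a^2+b^2+c^2)$, one obtains a bound of the form $C_1\bigl(r^2+\lambda_k^2\gamma^2 r^{2\alpha}+\lambda_k^2\bigr)$ with an absolute constant $C_1$. The one place where the assumption $\lambda_k\geq\lambda_1$ is used is to homogenise the first term: since $\lambda_k\geq\lambda_1$ we have $r^2\leq\lambda_k^2 r^2/\lambda_1^2$, so the denominator is at most $C_2\,\lambda_k^2\bigl(r^2/\lambda_1^2+\gamma^2 r^{2\alpha}+1\bigr)$. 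Dividing the numerator $\tfrac{\gamma}{\pi}\lambda_k r^\alpha\sin\alpha\pi$ by this, one factor $\lambda_k$ cancels and we are left with an estimate of the shape
\[
b_\alpha(\lambda_k,r)\geq\frac{c\,\gamma\sin\alpha\pi}{\lambda_k}\cdot\frac{r^\alpha}{\dfrac{r^2}{\lambda_1^2}+\gamma^2 r^{2\alpha}+1}.
\]

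Integrating this inequality against $e^{-rT}\,dr$ and combining with the first step yields the claim with $C(\alpha,\gamma,\lambda_1)$ of the stated form; it only remains to observe that this constant is finite and strictly positive, which is immediate since the integrand is comparable to $r^\alpha$ as $r\to 0^+$ and is dominated by $\lambda_1^2 r^{\alpha-2}e^{-rT}$ for large $r$. I do not anticipate any serious obstacle here: the only mildly delicate point is arranging the denominator estimate so that $r^2$, $r^{2\alpha}$, and the constant $1$ all appear with the single common factor $\lambda_k^2$, and it is exactly this step --- through $\lambda_k\geq\lambda_1$ --- that brings $\lambda_1$, rather than $\lambda_k$, into the final constant.
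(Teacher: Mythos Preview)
The paper does not actually supply a proof of this lemma; it merely quotes the bound from \cite{Luc1}. Your argument is correct and is, in fact, the natural one: the same denominator estimate via $(a+b+c)^2\le 3(a^2+b^2+c^2)$ and the same $\lambda_k\ge\lambda_1$ homogenisation step appear verbatim in the paper's own proof of Lemma~\ref{Main} (see \eqref{denominator}), so your approach coincides with the technique the authors themselves rely on elsewhere. The only cosmetic discrepancy is that tracking the constants in your bound produces a prefactor $\tfrac{\gamma\sin\alpha\pi}{4\pi}$ rather than the stated $\tfrac{\gamma\sin\alpha\pi}{4}$; this does not affect the use of the lemma anywhere in the paper, and you were appropriately careful to say only that the constant is ``of the stated form.''
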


Next, we estimate the derivative $\partial_\lambda B_\alpha (\lambda, t_0)$ from above when $\lambda\geq \lambda_1>0$, where $\lambda_1$ is the first eigenvalue of the operator $A$.

\begin{lemma}
	\label{Main}
	Let $0<t_0\leq T$, $\gamma>0$, $\alpha\in (0,1)$ be given numbers. There exists a positive number $\Lambda_0=\Lambda_0(t_0, \gamma, \alpha, \lambda_1)>0$ such that for any $\lambda\geq \Lambda_0$ the inequality
	\begin{equation}\label{main}
		\partial_\lambda B_\alpha (\lambda, t_0)<0
	\end{equation}
	holds.
\end{lemma}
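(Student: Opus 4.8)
The plan is to differentiate the integral representation (\ref{BInt}) with respect to $\lambda$ under the integral sign, which is legitimate by dominated convergence: for $\lambda$ ranging over a compact subset of $(0,\infty)$, the formula (\ref{add1}) shows that $\partial_\lambda b_\alpha(\lambda,r)$ is a rational expression in $r$ and $r^\alpha$ with denominator bounded away from zero, hence of at most polynomial growth in $r$, and therefore $e^{-rt_0}|\partial_\lambda b_\alpha(\lambda,r)|$ is dominated by an integrable function. This gives
\[
\partial_\lambda B_\alpha(\lambda,t_0)=\int_0^\infty e^{-rt_0}\,\partial_\lambda b_\alpha(\lambda,r)\,dr .
\]
First I would compute $\partial_\lambda b_\alpha$ explicitly. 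Writing the denominator in (\ref{add1}) as $D(\lambda,r)=c(r)\lambda^2-2a(r)r\lambda+r^2$ with $a(r)=1+\gamma r^\alpha\cos\alpha\pi$ and $c(r)=a(r)^2+(\gamma r^\alpha\sin\alpha\pi)^2=\bigl|1+\gamma r^\alpha e^{i\alpha\pi}\bigr|^2$, a short quotient-rule computation gives
\[
\partial_\lambda b_\alpha(\lambda,r)=\frac{\gamma\sin\alpha\pi}{\pi}\,r^\alpha\,\frac{r^2-c(r)\lambda^2}{D(\lambda,r)^2},
\]
so the sign of the integrand is that of $r^2-c(r)\lambda^2$.

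The geometric fact that drives the argument is that the ray $\{1+\gamma r^\alpha e^{i\alpha\pi}:r\ge 0\}$ stays at distance at least $\sin\alpha\pi$ from the origin, i.e. $c(r)\ge\sin^2\alpha\pi=:c_0^2>0$ for all $r\ge 0$. Consequently $\partial_\lambda b_\alpha(\lambda,r)<0$ on the whole interval $(0,c_0\lambda)$, while the set where it is nonnegative is contained in $\{r>c_0\lambda\}$, which is pushed off to infinity as $\lambda$ grows. I would then estimate the two competing contributions separately. For the negative part, restrict to $r\in(0,1)$ (taking $\lambda$ large enough that $c_0\lambda>1$): elementary bounds give $c(r)\lambda^2-r^2\ge\tfrac12 c_0^2\lambda^2$ and $D(\lambda,r)\le(2+\gamma)^2\lambda^2$, whence
\[
\int_0^1 e^{-rt_0}\,\partial_\lambda b_\alpha(\lambda,r)\,dr\le-\frac{C_1}{\lambda^2},\qquad C_1=C_1(\alpha,\gamma,t_0)>0,
\]
and the contribution of $(1,c_0\lambda)$ is negative and can simply be discarded. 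For the (possibly) positive part, on $\{r>c_0\lambda\}$ one uses $0\le r^2-c(r)\lambda^2\le r^2$ together with $D(\lambda,r)=(a(r)\lambda-r)^2+(\gamma r^\alpha\sin\alpha\pi)^2\lambda^2\ge(\gamma r^\alpha\sin\alpha\pi)^2\lambda^2$, which yields $\partial_\lambda b_\alpha(\lambda,r)\le C\,r^{2-3\alpha}\lambda^{-4}$ there, and then
\[
\int_{c_0\lambda}^\infty e^{-rt_0}\,\partial_\lambda b_\alpha(\lambda,r)\,dr\le\frac{C}{\lambda^4}\int_{c_0\lambda}^\infty e^{-rt_0}r^{2-3\alpha}\,dr\le C_2\,\frac{e^{-c_0 t_0\lambda/2}}{\lambda^4}.
\]

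Combining the three pieces gives $\partial_\lambda B_\alpha(\lambda,t_0)\le -C_1\lambda^{-2}+C_2 e^{-c_0 t_0\lambda/2}\lambda^{-4}$, which is strictly negative once $\lambda\ge\Lambda_0$ for a suitable $\Lambda_0=\Lambda_0(t_0,\gamma,\alpha)$, proving (\ref{main}). I expect the main obstacle to be exactly the region of large $r$, where $\partial_\lambda b_\alpha$ changes sign and becomes positive; the resolution is the observation above that this region lies beyond $r=c_0\lambda$, so that the factor $e^{-rt_0}$ renders its contribution exponentially small in $\lambda$ and hence negligible against the $\lambda^{-2}$-sized negative main term coming from $r\in(0,1)$.
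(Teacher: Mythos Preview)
Your argument is correct. Both your proof and the paper's differentiate the representation (\ref{BInt}) in $\lambda$ and balance a dominant negative contribution of order $\lambda^{-2}$ against a smaller positive remainder, but the organisation is genuinely different. The paper first factors $B_\alpha(\lambda,t_0)=\lambda^{-1}\int_0^\infty e^{-t_0 r}b_{\alpha,1}(\lambda,r)\,dr$ and applies the product rule, obtaining a clean negative term $-\lambda^{-2}\int e^{-t_0 r}b_{\alpha,1}\,dr$ (bounded below by $C_0(t_0)\lambda^{-2}$) plus a second integral which it estimates in absolute value by $c\lambda^{-3}(t_0^{-\alpha-2}+t_0^{-3})$ after splitting at a fixed point $r_0$. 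You instead differentiate $b_\alpha$ directly, read off the sign from the factor $r^2-c(r)\lambda^2$, and use the uniform lower bound $c(r)\ge\sin^2\alpha\pi$ to localise the nonnegative region to $\{r>c_0\lambda\}$; this yields an exponentially small positive tail $O(\lambda^{-4}e^{-c_0 t_0\lambda/2})$ rather than the paper's $O(\lambda^{-3})$. Your route is a bit more transparent about \emph{why} the derivative becomes negative (the sign-changing curve $r=\sqrt{c(r)}\,\lambda$ escapes to infinity) and gives a sharper remainder, while the paper's factorisation trick avoids the sign discussion and tracks the $t_0$-dependence of the constants more explicitly, which it then reuses in the proof of Lemma~\ref{MainT0}. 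One minor remark: your bound $D(\lambda,r)\le(2+\gamma)^2\lambda^2$ on $(0,1)$ should carry an extra $+\gamma^2$ in the bracket, and your $\Lambda_0$ in fact does not depend on $\lambda_1$, unlike the statement, but neither point affects the argument.
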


\begin{proof} We rewrite the function  $B_\alpha (\lambda, t_0)$  in the form
	\[
	B_\alpha (\lambda, t_0) = \frac{1}{\lambda}\int\limits_0^\infty e^{-t_0 r}b_{\alpha,1}(\lambda, r ) dr,	
	\]
	where
	\[
	b_{\alpha,1}(\lambda, r) =\frac{\gamma}{\pi} \,\,\frac{r^\alpha \sin\alpha \pi}{(-\frac{r}{\lambda} +\gamma r^\alpha \cos\alpha \pi +1)^2+(\gamma r^\alpha  \sin \alpha \pi)^2},
	\]
	Now differentiating with respect to $\lambda$ we have
	\begin{align}
		\partial_\lambda B_\alpha (\lambda, t_0) &=  -\frac{1}{\lambda^2}\int\limits_0^\infty e^{-t_0 r}b_{\alpha,1}(\lambda, r ) dr  \notag \\
		&+ \label{B'}
		\frac{2}{\lambda^3}\int\limits_0^\infty e^{-t_0 r}b_{\alpha,1}(\lambda, r )\,\,\frac{r\big[ \frac{r}{\lambda}-\gamma r^{\alpha} \cos\alpha \pi-1\big]}{(-\frac{r}{\lambda} +\gamma r^\alpha \cos\alpha \pi +1)^2+(\gamma r^\alpha  \sin \alpha \pi)^2} dr.
	\end{align}
	We estimate each term in the latter separately. For the first integral, taking into account the inequality
	$(a+b+c)^2\leq 3(a^2+b^2+c^2)$, we have
	\begin{align}
		(-\frac{r}{\lambda} +\gamma r^\alpha \cos\alpha \pi +1)^2+(\gamma r^\alpha  \sin \alpha \pi)^2 &\leq 3(\frac{r^2}{\lambda_1^2}+ \gamma^2 r^{2\alpha}+1)+\gamma^2 r^{2\alpha} \notag \\
		&\leq	
		\label{denominator}
		\left\{
		\begin{aligned}
			&c,\,\, r<1; \\
			&c\, r^2, \,\, r\geq 1.
		\end{aligned}
		\right.
	\end{align}
	We note that here and elsewhere in this section, $c$ denotes a constant (not necessarily the same one), depending on the fixed parameters $\lambda_1$, $\alpha$ and $\gamma$.
	Therefore
	\begin{equation}
		\label{I_1}
		\int\limits_0^1 e^{-t_0 r}b_{\alpha,1}(\lambda, r ) dr\geq c\int\limits_0^1 r^\alpha e^{-rt_0} dr
		=c\,t_0^{-1-\alpha}\, I_1(t_0),
	\end{equation}
	where
	\begin{equation}
		\label{I1}
		I_1(t_0)= \int\limits_0^{t_0} \xi^\alpha e^{-\xi} d\xi,
	\end{equation}
	and
	\begin{equation}
		\label{I_2}
		\int\limits_1^\infty e^{-t_0 r}b_{\alpha,1}(\lambda, r ) dr\geq c\int\limits_1^\infty r^{\alpha-2} e^{-rt_0} dr
		=c\,t_0^{1-\alpha}\, I_2(t_0).
	\end{equation}
	where
	\begin{equation}
		\label{I2}
		I_2(t_0)= \int\limits_{t_0}^\infty \xi^{\alpha-2} e^{-\xi} d\xi.
	\end{equation}
	It follows from  estimates \eqref{I_1} and \eqref{I_2} that
	\begin{equation}\label{firsttirm}
		-\frac{1}{\lambda^2}\int\limits_0^\infty e^{-t_0 r}b_{\alpha,1}(\lambda, r ) dr\leq -\frac{C_0(t_0)}{\lambda^2},
	\end{equation}
	where $C_0(t_0)=c \,\left(t_0^{-1-\alpha}\, I_1(t_0)+t_0^{1-\alpha}\, I_2(t_0)\right)$.
	
	Let us now estimate the second term in (\ref{B'}) from above. Denote
	\[
	R(r)= -\frac{r}{\lambda} +\gamma r^\alpha \cos\alpha \pi +1.
	\]
	Then $R(0)=1$ and we can choose a positive number $r_0$ such that for all $r\in (0,r_0)$ one has $R(r)\geq \varepsilon$ with some $\varepsilon>0$.
	For these $r$ we have
	\[
	b_{\alpha, 1} (\lambda, r)\leq  \frac{\gamma}{\pi}\,\,\frac{r^\alpha \sin\alpha \pi}{(-\frac{r}{\lambda} +\gamma r^\alpha \cos\alpha \pi +1)^2}\leq \frac{\gamma}{\varepsilon^2\pi}\,\,r^\alpha \sin\alpha \pi.
	\]
	Taking this into account, we obtain
	\begin{align}
		\int\limits_0^{r_0} & e^{-t_0 r}b_{\alpha,1}(\lambda, r )\,\,\frac{r\big[ \frac{r}{\lambda}-\gamma r^{\alpha} \cos\alpha \pi-1\big]}{(-\frac{r}{\lambda} +\gamma r^\alpha \cos\alpha \pi +1)^2+(\gamma r^\alpha  \sin \alpha \pi)^2} dr \notag \\	
		&\leq
		\frac{\gamma}{\varepsilon^3\pi}\sin\alpha \pi\int\limits_0^{r_0} e^{-t_0 r} r^{\alpha+1} dr \notag \\
		&= \frac{c}{t_0^{\alpha+2}}\int\limits_0^{r_0t_0} e^{-\xi} \xi^{\alpha+1} dr \leq \frac{c \,\Gamma (\alpha+2)}{t_0^{\alpha+2}}.
		\label{secondfirst}
	\end{align}
	Now let $r\geq r_0$. Then, evidently,
	\[
	b_{\alpha, 1} (\lambda, r)\leq  \frac{1}{\gamma\pi}\,\,\frac{1}{r^\alpha \sin\alpha \pi},
	\]
	and, moreover
	\begin{align*}
		& \frac{r\big| \frac{r}{\lambda}-\gamma r^{\alpha} \cos\alpha \pi-1\big|}{(-\frac{r}{\lambda} +\gamma r^\alpha \cos\alpha \pi +1)^2+(\gamma r^\alpha  \sin \alpha \pi)^2}	\\
		& \leq \frac{r^{1-2\alpha}}{(\gamma \sin\alpha \pi)^2}\,\big| \frac{r}{\lambda}-\gamma r^{\alpha} \cos\alpha \pi-1\big| \leq
		\frac{C r^{2-2\alpha}}{(\gamma \sin\alpha \pi)^2},
	\end{align*}
	where $C= \lambda_1^{-1}+r_0^{\alpha-1}\gamma^{-1}+r_0^{-1}$.
	Hence
	\begin{align*}
		\int\limits_{r_0}^\infty &e^{-t_0 r}b_{\alpha,1}(\lambda, r )\,\,\frac{r\big| \frac{r}{\lambda}-\gamma r^{\alpha} \cos\alpha \pi-1\big|}{(-\frac{r}{\lambda} +\gamma r^\alpha \cos\alpha \pi +1)^2+(\gamma r^\alpha  \sin \alpha \pi)^2} dr \notag \\	
		&\leq \frac{C}{\pi(\gamma \sin\alpha \pi)^3}\,\,\int\limits_{r_0}^\infty e^{-t_0 r}r^{2-3\alpha}dr\leq \frac{C}{\pi(\gamma \, r_0^\alpha \sin\alpha \pi)^3}\,\,\int\limits_{0}^\infty e^{-t_0 r}r^{2}dr \notag \\
		&=\frac{C \Gamma(3)}{\pi(\gamma \, r_0^\alpha\sin\alpha \pi)^3} \,\cdot \frac{1}{t_0^{3}}.
	\end{align*}
	It follows from the latter and estimate (\ref{secondfirst})) that the second term in (\ref{B'}) is estimated from above by the quantity
	\begin{equation}\label{secondtirm}
		\frac{c}{\lambda^3}\cdot \left[ \frac{1}{t_0^{\alpha+2}} +\frac{1}{t_0^{3}}\right].
	\end{equation}
	Finally estimates (\ref{firsttirm}) and (\ref{secondtirm}) imply
	\begin{equation}\label{B'lambda}
		\partial_\lambda B_\alpha (\lambda, t_0) \leq -\frac{1}{\lambda^{2}} \left[C_0(t_0) -\frac{c}{\lambda}\left(\frac{1}{t_0^{\alpha+2}}+\frac{1}{t_0^{3}}\right)\right] < 0.
	\end{equation}
	The lemma is proved.
\end{proof}

Let $\{\lambda_k\}$ be the set of eigenvalues of the operator $A$.
Recall that this set has no finite limit points. In particular, the multiplicity of any eigenvalue is finite. Let $\beta\in (0,1)$.
In our further analysis of non-local problem \eqref{probMain} we will face with the solution of the following equation
\begin{equation}\label{eqbeta}
	B_\alpha (\lambda, t_0)=\beta
\end{equation}
with respect to $\lambda$. If $\lambda_0$ is a root of equation (\ref{eqbeta}), then the set of all $k$ for which $\lambda_k=\lambda_0$ will be denoted by $K_0$. If there is no an eigenvalue $\lambda_k$ equal to $\lambda_0$, then evidently the set $K_0$ is empty.

\begin{remark}\label{K0}
	According to Lemma \ref{Main}, starting from some number $k$, the function $B_\alpha (\lambda_k, t_0)$ decreases strictly with respect to $\lambda_k$ (if the multiplicity of the eigenvalue $\lambda_k$ is not taken into account). Therefore, the set $K_0$ is always finite.
\end{remark}

Thus, Lemma \ref{Main} states that only a finite number of eigenvalues $\lambda_k$ can be solutions of equation (\ref{eqbeta}). It can also be proved that if $t_0$ is large enough, then there can be only one such an eigenvalue. Indeed, the following statement is true:

\begin{lemma}\label{MainT0} Let $\gamma>0$ and $\alpha\in (0,1)$ be given numbers. There exists a positive number $T_0=T_0(\gamma, \alpha, \lambda_1)\geq 1$ such that for any $t_0$ in the interval $T_0\leq t_0\leq T$ the inequality
	\begin{equation}\label{mainT0}
		\partial_\lambda B_\alpha (\lambda, t_0)<0, \quad \lambda\geq \lambda_1.
	\end{equation}
	holds.
\end{lemma}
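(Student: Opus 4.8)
The plan is to sharpen the proof of Lemma~\ref{Main} by tracking how every constant there depends on $t_0$, and by checking that those constants can in fact be taken uniform in $\lambda\geq\lambda_1$. Indeed, the estimates (\ref{firsttirm}) and (\ref{secondtirm}) leading to (\ref{B'lambda}) are valid for every $\lambda\geq\lambda_1$: the first used only $r^2/\lambda^2\leq r^2/\lambda_1^2$, and for the second the sole potential source of $\lambda$-dependence is the choice of the cut-off $r_0$ with $R(r)=-\frac r\lambda+\gamma r^\alpha\cos\alpha\pi+1\geq\varepsilon$ on $(0,r_0)$. Since $R(r)\geq 1-\frac r{\lambda_1}+\gamma r^\alpha\cos\alpha\pi$ for $\lambda\geq\lambda_1$ and the right-hand side equals $1$ at $r=0$, one can fix $r_0=r_0(\alpha,\gamma,\lambda_1)>0$ and take $\varepsilon=\tfrac12$ serving all $\lambda\geq\lambda_1$ at once. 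Running the rest of that proof verbatim would then yield, for all $\lambda\geq\lambda_1$ and $t_0\in(0,T]$,
\[
\partial_\lambda B_\alpha(\lambda,t_0)\leq-\frac1{\lambda^2}\Big[C_0(t_0)-\frac c\lambda\Big(\frac1{t_0^{\alpha+2}}+\frac1{t_0^{3}}\Big)\Big],
\]
with $C_0(t_0)=c\big(t_0^{-1-\alpha}I_1(t_0)+t_0^{1-\alpha}I_2(t_0)\big)$ and $c=c(\alpha,\gamma,\lambda_1)$.

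Next I would estimate $C_0(t_0)$ from below for $t_0\geq1$. Since $I_1(t_0)=\int_0^{t_0}\xi^\alpha e^{-\xi}\,d\xi$ increases in $t_0$, we have $I_1(t_0)\geq I_1(1)>0$; dropping the nonnegative term $t_0^{1-\alpha}I_2(t_0)$ gives $C_0(t_0)\geq c_1\,t_0^{-1-\alpha}$ with $c_1=c_1(\alpha,\gamma,\lambda_1)>0$.

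Finally, since $\frac c\lambda\big(t_0^{-\alpha-2}+t_0^{-3}\big)\leq\frac c{\lambda_1}\big(t_0^{-\alpha-2}+t_0^{-3}\big)$ for $\lambda\geq\lambda_1$, the displayed bound reduces to
\[
\partial_\lambda B_\alpha(\lambda,t_0)\leq-\frac1{\lambda^2}\,t_0^{-1-\alpha}\Big[c_1-\frac c{\lambda_1}\big(t_0^{-1}+t_0^{\alpha-2}\big)\Big],\qquad \lambda\geq\lambda_1,\ t_0\geq1.
\]
Because $\alpha\in(0,1)$, both $t_0^{-1}$ and $t_0^{\alpha-2}$ tend to $0$ as $t_0\to\infty$, so the bracket tends to $c_1>0$; hence there is $T_0=T_0(\gamma,\alpha,\lambda_1)\geq1$ for which the bracket is positive for all $t_0\geq T_0$, forcing $\partial_\lambda B_\alpha(\lambda,t_0)<0$ for every $\lambda\geq\lambda_1$ and $t_0\in[T_0,T]$. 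As a byproduct, for such $t_0$ the map $\lambda\mapsto B_\alpha(\lambda,t_0)$ is strictly decreasing on $[\lambda_1,\infty)$, so equation (\ref{eqbeta}) has at most one root there and the set $K_0$ of Remark~\ref{K0} contains, up to multiplicity, at most one eigenvalue.

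Since this is a routine refinement of Lemma~\ref{Main}, I do not expect a serious obstacle. The only point needing genuine care is the uniformity in $\lambda$ of the constants — that the cut-off $r_0$ (and hence $c$, $c_1$) does not degenerate as $\lambda\downarrow\lambda_1$ — together with the elementary but decisive observation that the leading negative contribution $t_0^{-1-\alpha}$ decays strictly more slowly than the positive correction terms $t_0^{-\alpha-2}$ and $t_0^{-3}$.
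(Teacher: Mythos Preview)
Your proposal is correct and follows essentially the same approach as the paper: revisit the estimates from Lemma~\ref{Main}, note that the constants (in particular the cut-off $r_0$) can be taken uniform in $\lambda\geq\lambda_1$, bound $C_0(t_0)\geq c\,t_0^{-1-\alpha}$ for $t_0\geq 1$, and conclude by comparing decay rates in $t_0$. Your treatment is slightly cleaner---you make the uniformity of $r_0$ explicit and simply drop the nonnegative $I_2$ term rather than estimating it via integration by parts---but the argument is the same in substance.
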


\begin{proof}It is easy to see that in estimates (\ref{firsttirm}) and (\ref{secondtirm}) the parameter $\lambda$ can be replaced by its smallest value $\lambda_1$ and the corresponding estimates remain valid. Then estimate (\ref{B'lambda}) takes the form:
	\begin{equation}\label{B'lambda_t}
		\partial_\lambda B_\alpha (\lambda, t_0) \leq -\frac{1}{\lambda_1^{2}} \left[C_0(t_0) -\frac{c}{\lambda_1}\left(\frac{1}{t_0^{\alpha+2}}+\frac{1}{t_0^{3}}\right)\right].
	\end{equation}
	Now suppose that $t_0 \geq 1$ and estimate $C_0(t_0)$ from below. For the integral $I_1(t_0),$ defined in \eqref{I1}, we have
	\[
	I_1(t_0)\geq \int\limits_0^{1} \xi^\alpha e^{-\xi} d\xi\geq \frac{1}{(\alpha+1)\, e}.
	\]
	Similarly for the integral $I_2(t_0)$, defined in \eqref{I2}, after integration by parts twice, we obtain
	\begin{align*}
		I_2(t_0) &=\frac{t_0^{\alpha-1} e^{-t_0}}{1-\alpha}+\frac{t_0^{\alpha} e^{-t_0}}{(1-\alpha)\alpha}+\frac{1}{(1-\alpha)\alpha}\int\limits_{t_0}^\infty \xi^{\alpha} e^{-\xi} d\xi \\
		&\geq e^{-t_0}\left[\frac{t_0^{\alpha-1} }{1-\alpha}+\frac{t_0^{\alpha} }{(1-\alpha)\alpha}+\frac{t_0^{\alpha+1} }{(1-\alpha)\alpha(\alpha+1)}\right].
	\end{align*}
	Therefore, for sufficiently large $t_0$ we have	
	\[
	C_0(t_0)\geq c\, t_0^{-1-\alpha}.
	\]
	Consequently, estimate (\ref{B'lambda_t}) takes the form
	\[
	\partial_\lambda B_\alpha (\lambda, t_0) \leq -\frac{c}{\lambda_1^{2}\, t_0^{1+\alpha}} \left[1 -\frac{1}{\lambda_1}\left(\frac{1}{t_0}+\frac{1}{t_0^{2-\alpha}}\right)\right].
	\]
	This implies the assertion of the lemma.	
\end{proof}

\begin{remark}
	\label{}
	Under the conditions of this lemma, only one eigenvalue $\lambda_{k_0}$ may satisfy equation (\ref{eqbeta}). Let the multiplicity of $\lambda_{k_0}$ be equal to $p$. Then $K_0=\{k_0, k_0+1, \cdots, k_0+p-1\}$.
	
	We also note that in Lemma \ref{Main} $\lambda$ is a sufficiently large and $t_0$ is an arbitrary positive number, in Lemma \ref{MainT0}, on the contrary, $t_0$ is sufficiently large and $\lambda\geq \lambda_1$  is an arbitrary number.
	
\end{remark}

\section{Existence of a solution of the non-local problem \eqref{probMain}}

To solve the non-local problem (\ref{probMain}), we divide it into two auxiliary
problems:
\begin{equation}\label{prob1a}
	\left\{
	\begin{aligned}
		&\partial_t \omega(t)  + (1+\gamma\, \partial_t^\alpha)A \omega(t) = f(t),\quad 0< t \leq T,\\
		&\omega(0) =0,
	\end{aligned}
	\right.
\end{equation}
and
\begin{equation}\label{prob1b}
	\left\{
	\begin{aligned}
		&\partial_t w(t) + (1+\gamma\, \partial_t^\alpha)A w(t) = 0,\quad 0< t \leq T,\\
		&w(t_0) = \beta w(0)+\psi,
	\end{aligned}
	\right.
\end{equation}
where $\psi\in H$ is a given element and $t_0$ is any fixed number from $(0, T]$.

Problems (\ref{prob1a}) and (\ref{prob1b}) are a special cases of problem (\ref{probMain}), so solutions to problems (\ref{prob1a}) and (\ref{prob1b}) are defined similarly to Definition \ref{def}.

\begin{lemma}\label{1=2+3}
	Let $\psi$ in \eqref{prob1b} have the form $\psi=\varphi-\omega(t_0),$ where $\varphi$ is a function in  non-local problem (\ref{probMain}). If $\omega(t)$ and $w(t)$
	are solutions of problems (\ref{prob1a}) and (\ref{prob1b}) correspondingly, then the solution to problem (\ref{probMain}) has the form $u(t)=\omega(t)+w(t)$ .
\end{lemma}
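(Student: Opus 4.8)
The plan is to verify directly that the superposition $u(t)=\omega(t)+w(t)$ satisfies all requirements of Definition \ref{def} with the prescribed data, using linearity of the equation together with the special choice $\psi=\varphi-\omega(t_0)$.

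First I would observe that since $\omega(t)$ solves (\ref{prob1a}) and $w(t)$ solves (\ref{prob1b}), both belong to $C([0,T];H)$, hence so does their sum $u(t)=\omega(t)+w(t)$. Likewise, the regularity requirements $\partial_t u(t)$, $Au(t)$, $\partial_t^\alpha A u(t)\in C((0,T);H)$ follow because each of these operations is linear and the corresponding quantities for $\omega$ and for $w$ lie in $C((0,T);H)$ by the definitions of solutions to the two auxiliary problems. Next, adding the two differential equations and using linearity of the operator $\partial_t+(1+\gamma\,\partial_t^\alpha)A$, I get
\[
\partial_t u(t)+(1+\gamma\,\partial_t^\alpha)A u(t)=f(t)+0=f(t),\qquad 0<t\leq T,
\]
so $u$ satisfies the equation in (\ref{probMain}).

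It remains to check the non-local condition $u(t_0)=\beta u(0)+\varphi$. Here I would use $\omega(0)=0$, so that $u(0)=w(0)$, and then compute
\[
u(t_0)=\omega(t_0)+w(t_0)=\omega(t_0)+\beta w(0)+\psi=\omega(t_0)+\beta u(0)+(\varphi-\omega(t_0))=\beta u(0)+\varphi,
\]
which is exactly the required condition. This completes the verification that $u=\omega+w$ is a solution of (\ref{probMain}).

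There is no real obstacle here: the statement is essentially a decomposition lemma, and the only point that requires the specific form of $\psi$ is the last line, where the term $\omega(t_0)$ coming from $w(t_0)=\beta w(0)+\psi$ must be cancelled against the $\omega(t_0)$ appearing in $u(t_0)=\omega(t_0)+w(t_0)$; the choice $\psi=\varphi-\omega(t_0)$ is made precisely so that this cancellation occurs and leaves $\beta u(0)+\varphi$. One should perhaps remark that $\psi$ is a legitimate element of $H$, i.e. that $\omega(t_0)\in H$, which holds because $\omega\in C([0,T];H)$; this is implicit once the existence of $\omega$ is assumed, but it is worth stating so that problem (\ref{prob1b}) with this $\psi$ is well-defined.
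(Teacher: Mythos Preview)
Your proof is correct and is precisely the standard verification the paper has in mind; the paper itself omits the proof with the remark that it is standard, and your argument---checking regularity by linearity, the equation by adding the two auxiliary equations, and the non-local condition via the cancellation $\omega(t_0)+\psi=\varphi$ together with $\omega(0)=0$---is exactly that routine computation.
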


Proof of this lemma is standard, and therefore, we omit it.

Auxiliary problem (\ref{prob1a}) is solved in \cite{AshurovVaisova}. Let us formulate the corresponding result:

\begin{theorem}\label{Thprob1a}
	Let $\varepsilon\in (0,1)$ and $f(t) \in C ([0,T];D(A^\varepsilon))$. Then problem (\ref{prob1a}) has a unique solution
	\begin{equation}\label{v_fi0}
		\omega(t)= \sum\limits_{k=1}^\infty
		\left[\int\limits_{0}^tB_\alpha (\lambda_k, t-\tau)f_k(\tau)d\tau\right] v_k.
	\end{equation}
	Moreover, there is a constant $ C_\varepsilon> 0 $, such that
	\begin{equation}\label{estimatev_fi0}
		||\partial_t \omega(t)||^2 + ||\partial_t^\alpha A \omega(t)||^2 \leq C_\varepsilon
		\max\limits_{t\in[0,T]} ||f||_\varepsilon^2,\quad 0 \leq t \leq T.
	\end{equation}
\end{theorem}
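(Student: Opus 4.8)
The plan is to solve problem \eqref{prob1a} by the Fourier method. First I would expand the (as yet unknown) solution $\omega(t)$ and the right-hand side $f(t)$ in the orthonormal basis $\{v_k\}$, writing $\omega(t)=\sum_k \omega_k(t) v_k$ and $f(t)=\sum_k f_k(t) v_k$. Substituting into the equation and using that $A v_k=\lambda_k v_k$, each Fourier coefficient $\omega_k(t)$ must satisfy the scalar Cauchy problem $\omega_k'(t)+\lambda_k(1+\gamma\partial_t^\alpha)\omega_k(t)=f_k(t)$, $\omega_k(0)=0$. By Lemma \ref{BazhEquation} this has the unique solution $\omega_k(t)=\int_0^t B_\alpha(\lambda_k,t-\tau) f_k(\tau)\,d\tau$, which gives formula \eqref{v_fi0}. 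Uniqueness of the solution follows because if $\omega$ is any solution in the sense of Definition \ref{def}, then its Fourier coefficients satisfy the same scalar Cauchy problem and hence coincide with the above; the completeness of $\{v_k\}$ then forces $\omega$ to equal the series.

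Next I would verify that the series \eqref{v_fi0} actually defines a function with the regularity demanded by Definition \ref{def}, and establish the estimate \eqref{estimatev_fi0}. The key point is to control $\partial_t\omega(t)$ and $\partial_t^\alpha A\omega(t)$ in the $H$-norm. Differentiating $\omega_k(t)$ in $t$ and using the equation itself, one has $\omega_k'(t) = f_k(t) - \lambda_k(1+\gamma\partial_t^\alpha)\omega_k(t)$, so it suffices to bound $\lambda_k\omega_k(t)$ and $\lambda_k\partial_t^\alpha\omega_k(t)$ — equivalently $\partial_t^\alpha(A\omega)_k$. For the first I would integrate by parts in $\int_0^t B_\alpha(\lambda_k,t-\tau)f_k(\tau)\,d\tau$, moving a derivative onto $f_k$ (this is where $f\in C([0,T];D(A^\varepsilon))$ is used, so that $\lambda_k^\varepsilon f_k$ is summable): writing $\lambda_k\int_0^t B_\alpha(\lambda_k,t-\tau)f_k(\tau)\,d\tau$ and using Lemma \ref{Bazh}(3)--(4) together with $\lambda_k|\partial_t B_\alpha(\lambda_k,t)|\le C t^{\alpha-2}/\lambda_k\cdot\lambda_k = $ the bound from Lemma \ref{B't}, one gets a factor $\lambda_k^{-\varepsilon}$ to spare after pairing the integrable singularity $t^{\alpha-1}$ with the boundary terms. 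For $\partial_t^\alpha(A\omega)_k$ one rewrites it via $\partial_t^\alpha \omega_k = \gamma^{-1}\lambda_k^{-1}(f_k - \omega_k' - \lambda_k\omega_k)$ from the scalar equation, reducing everything to the already-estimated quantities. Summing the squared bounds over $k$ and using $\sum_k \lambda_k^{2\varepsilon}|f_k(t)|^2 = \|f(t)\|_\varepsilon^2 \le \max_{[0,T]}\|f\|_\varepsilon^2$ yields \eqref{estimatev_fi0} with $C_\varepsilon$ depending only on $\varepsilon$, $\alpha$, $\gamma$, and the constants from Lemmas \ref{Bazh}--\ref{B't}.

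Finally I would check the continuity claims: that $\omega\in C([0,T];H)$ and that $\partial_t\omega$, $A\omega$, $\partial_t^\alpha A\omega$ all lie in $C((0,T);H)$. This is done by showing the relevant series converge uniformly on $[0,T]$ (resp. on compact subsets of $(0,T)$) in the $H$-norm — the tail estimates are exactly those produced in the previous step, uniform in $t$, so the uniform limit of continuous partial sums is continuous. One also checks $\omega(0)=0$ directly from $\omega_k(0)=0$, and that the equation holds termwise, hence in $H$, on $(0,T]$.

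The main obstacle is the second step: extracting the full power $\lambda_k^{1}$ (for the $A\omega$ and $\partial_t^\alpha A\omega$ terms) and still being left with a convergent series. The decay of $B_\alpha(\lambda_k,\cdot)$ from Lemma \ref{Bazh}(3) only gives one power of $\lambda_k^{-1}$, which is not by itself enough to sum $\sum_k \lambda_k^{2}|\omega_k|^2$ against a general $L_2$ datum; this is precisely why the hypothesis $f(t)\in D(A^\varepsilon)$ (a mild smoothing of order $\varepsilon$) is imposed, and the delicate bookkeeping is to see that the combination of the integration-by-parts boundary terms, Lemma \ref{B't}, and Lemma \ref{Bazh}(3)--(4) genuinely delivers the extra $\lambda_k^{-\varepsilon}$ needed — with constants that do not blow up as $\varepsilon\to 0$ in a way that would spoil the statement for fixed $\varepsilon\in(0,1)$.
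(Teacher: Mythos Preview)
The paper does not actually prove this theorem: it is stated with the remark ``Auxiliary problem (\ref{prob1a}) is solved in \cite{AshurovVaisova}. Let us formulate the corresponding result,'' and no argument is given. So there is no in-paper proof to compare your proposal against; the reference \cite{AshurovVaisova} is where the work is done.

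That said, your overall plan---Fourier method, reduce to the scalar Cauchy problems solved by Lemma~\ref{BazhEquation}, then justify convergence and regularity via the estimates in Lemmas~\ref{Bazh} and~\ref{B't}---is the natural one and presumably matches what is done in \cite{AshurovVaisova}. One point in your write-up is off, though: you say you would ``integrate by parts in $\int_0^t B_\alpha(\lambda_k,t-\tau)f_k(\tau)\,d\tau$, moving a derivative onto $f_k$.'' The hypothesis $f\in C([0,T];D(A^\varepsilon))$ gives spatial smoothness (decay of $\lambda_k^\varepsilon f_k$), not time-differentiability of $f_k$, so there is no derivative to move onto $f_k$. The extra factor $\lambda_k^{-\varepsilon}$ has to come instead from interpolating between the available pointwise bounds on $B_\alpha$ and $\partial_t B_\alpha$ (Lemma~\ref{Bazh}(1),(3) and Lemma~\ref{B't}) so that the kernel carries a power $\lambda_k^{-\varepsilon}$ times something integrable in $t$; the $D(A^\varepsilon)$ assumption is then used simply as $\sum_k \lambda_k^{2\varepsilon}|f_k|^2<\infty$. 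Your diagnosis of \emph{why} $\varepsilon>0$ is needed is correct; it is only the mechanism (``integration by parts onto $f_k$'') that should be replaced by an interpolation of the $B_\alpha$-estimates.
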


Now let us consider non-local problem (\ref{prob1b}). We will seek the solution of this problem in the form of a generalized Fourier series
\[
w(t) = \sum\limits_{k=1}^\infty T_k(t) v_k,
\]
where $v_k$ are the eigenvectors of the operator $A$ and $T_k(t)$, $k\geq 1$, are  solutions of the following non-local
problems:
\begin{equation}\label{Teq}
	\left\{
	\begin{aligned}
		&	T'_k(t)+\lambda_k (1+\gamma \partial_t^\alpha)T_k(t)=0,\quad 0 < t \leq T;\\
		&T_k(t_0)=\beta T_k(0)+\psi_k,
	\end{aligned}
	\right.
\end{equation}
where $k \ge 1,$ $t_0\in (0, T]$ is a fixed point and $\psi_k$ is the Fourier coefficients of the element $\psi\in H$.
Denote $h_k=T_k(0), k=1,2,\dots.$
Then the unique solution to problem (\ref{Teq}) has the form (see Lemma \ref{BazhEquation})
\begin{equation}\label{Tk}
	T_k(t)=h_k B_\alpha (\lambda_k, t).
\end{equation}
To find the unknown numbers  $h_k,$ we use  the non-local conditions of (\ref{Teq}. Namely,
\[
h_k B_\alpha (\lambda_k, t_0)=\beta h_k + \psi_k.
\]
or, the same,
\begin{equation}\label{bk}
	h_k( B_\alpha (\lambda_k, t_0)-\beta)=\psi_k.
\end{equation}
If $\beta \geq 1,$  or $\beta < 0$ (note, $t_0>0$ and $\lambda_k>0$), then $B_\alpha (\lambda_k, t_0)\neq\beta$ due to Lemma \ref{Bazh}.
Therefore,  in these cases it follows from (\ref{bk}) that
\begin{equation}\label{bkEs1}
	h_k =\frac{\psi_k}{B_\alpha (\lambda_k, t_0)-\beta},
\end{equation}
and
\begin{equation} \label{bkEs12}
	|h_k|\leq C_\beta |\psi_k|, \,\, k\geq 1,
\end{equation}
where $C_\beta$ is a constant depending on $\beta$.
If $\beta = 0$, then $B_\alpha (\lambda_k, t_0)\neq 0$,  however,
in accordance with Lemma \ref{Bazh},  the function $B_\alpha (\lambda_k, t_0)$
asymptotically tends to zero, as $k \to \infty.$ Therefore, by Lemma \ref{BLower} one has:
$$
h_k =\frac{\psi_k}{B_\alpha (\lambda_k, t_0)},\,\, C_1 \lambda_k  |\psi_k|\leq |h_k|\leq
C_2 \lambda_k  |\psi_k|.
$$
Here constants $C_j, j=1,2, $ may depend on $\alpha, \gamma, \lambda_1$ and $t_0$. As noted above, this case has been studied in detail in
\cite{AshurovVaisova}. Therefore, we will not consider it further.

Now, let $0<\beta<1$ and consider equation (\ref{eqbeta}).
In accordance with Remark \ref{K0}, there are two possible cases: the set $K_0$ is empty and it
is not empty. If $K_0$ is empty, then since the set $\{\lambda_k\}$ does not have a finite limit point, the estimate in (\ref{bkEs1}) holds with some constant $C_\beta>0$ for all $k$.

Thus, if $\beta \in (0,1)$ and $K_0$ is empty, then the formal solution of problem (\ref{prob1b}) still has the form
\begin{equation}\label{w>1}
	w(t)=\sum\limits_{k=1}^{\infty} \frac{\psi_k}{B_\alpha (\lambda_k, t_0)-\beta} B_\alpha (\lambda_k, t)v_k.
\end{equation}

Finally, let us assume that $0<\beta<1$ and the set $K_0$ is not empty. In this case, due to equation (\ref{bk}), non-local problem (\ref{Teq}) has a solution if and only if the following orthogonality conditions are
verified:
\begin{equation}\label{orto}
	\psi_k=(\psi,v_k)=0, \,\, k\in K_0.
\end{equation}
Moreover, for  the values $k\in K_0$ arbitrary numbers $h_k$ are solutions of
equation (\ref{bk}). For all other $k$ we have
\begin{equation}\label{bkEs2}
	h_k =\frac{\psi_k}{B_\alpha (\lambda_k, t_0)-\beta},\quad
	|h_k|\leq C_\beta |\psi_k|, \quad k\notin K_0.
\end{equation}
Thus, the formal solution of problem (\ref{prob1b}) in this case
has the form
\begin{equation}\label{U1}
	w(t)=\sum\limits_{k\notin K_0} \frac{\psi_k}{B_\alpha (\lambda_k, t_0)-\beta} B_\alpha (\lambda_k, t)v_k+\sum\limits_{k\in K_0} h_k B_\alpha (\lambda_k, t)v_k.
\end{equation}

Now let us show
that the series (\ref{w>1}) and (\ref{U1}) indeed  define solutions to  non-local problem (\ref{prob1b}).
According to Definition \ref{def}, it suffices to show the applicability of the operators $\partial_t$ and $\partial_t^\alpha A$ term by term to these series and $w(t)\in C([0,T]; H)$, $\partial_t w(t), Aw(t), \partial_t^\alpha Aw(t)\in C((0,T); H)$. We
demonstrate this with the solution in (\ref{w>1}). In what concerns the solution in (\ref{U1}), then it is treated in exactly the same way.

Let $S_j(t), \ j \ge 1,$ be the sequence of partial sums of series (\ref{w>1}). Applying  Parseval's equality,
estimate (\ref{bkEs1}), and the first assertion of Lemma \ref{Bazh}, we have
\[
||S_j(t)||^2=\sum\limits_{k=1}^j \bigg|
\frac{\psi_k}{B_\alpha (\lambda_k, t_0)-\beta} B_\alpha (\lambda_k, t)\bigg|^2\leq C_\beta ||\psi||^2,
\]
Letting $j \to \infty,$ it follows from the latter that $w(t)\in C([0,T]; H)$. Further, we have
$$
A S_j(t) = \sum\limits_{k=1}^j  \frac{\lambda_k \psi_k}{B_\alpha (\lambda_k, t_0)-\beta}  B_\alpha (\lambda_k, t) v_k.
$$
Using the same reasoning (using the third assertion of Lemma \ref{Bazh}) as above, we get
\begin{equation}\label{Aw}
	||AS_j(t)||^2=\sum\limits_{k=1}^j \bigg|
	\frac{\lambda_k\psi_k}{B_\alpha (\lambda_k, t_0)-\beta} B_\alpha (\lambda_k, t)\bigg|^2\leq \frac{C_\beta}{t^{2(1-\alpha)}} ||\psi||^2,
\end{equation}
which implies that $Aw(t)\in C((0,T); H)$.
Next, applying Lemma \ref{B't}, we
have the following estimate:
\begin{equation}\label{Dalphaw}
	||\partial_t S_j(t)||^2 = \sum\limits_{k=1}^j  \left|\frac{\psi_k}{B_\alpha (\lambda_k, t_0)-\beta} \partial_t B_\alpha (\lambda_k, t)\right|^2\leq \frac{C_\beta}{\lambda_1t^{2(2-\alpha)}} ||\psi||^2.
\end{equation}
The latter implies $\partial_t w(t)\in C((0,T];H)$.
Equation (\ref{prob1b})written in the form $\partial_t w(t) = -Aw(t)-\gamma\, \partial_t^\alpha A w(t)$, $t>0$,
and the  estimates obtained above imply
\begin{equation}\label{Dw}
	||\partial_t^\alpha A w(t)||^2\leq \frac{C_\beta}{t^{2(2-\alpha)}} ||\psi||^2,
\end{equation}
Hence, $\partial_t^\alpha A w(t)\in C((0,T];H),$ as well.

Thus,  if $\beta \notin (0,1)$ or $\beta \in (0,1)$, but $K_0$ is an empty set, then the function (\ref{w>1}) is indeed a solution to problem (\ref{prob1b}).

Finally we show the uniqueness of the solution to problem
(\ref{prob1b}). Suppose we have two solutions: $w_1 (t)$, $w_2
(t)$ and set $w(t) = w_1 (t)- w_2 (t)$. Then we have
\[
\left\{
\begin{aligned}
	&\partial_t w(t) + (1+\gamma\, \partial_t^\alpha)A w(t) = 0,\quad 0< t \leq T;\\
	&w(t_0) = \beta w(0).
\end{aligned}
\right.
\]
Let $w(t)$ be any solution to this problem and let $w_k(t)=(w(t),v_k)$. Since the operator $A$ is self-adjoint,
one has
\begin{align*}
	\partial_t^\rho w_k(t) &=(\partial_t^\rho w(t),v_k)= - ( Aw(t),v_k )-\gamma(\partial_t^\alpha w, v_k)
	\\
	&= -\lambda_k(1+\gamma \partial_t^\alpha) w_k(t)
\end{align*}
or
\begin{equation}\label{wkeq}
	\partial_t^\rho w_k(t)+
	\lambda_k(1+\gamma \partial_t^\alpha) w_k(t)=0.
\end{equation}
It follows from the nonlocal condition  that
\begin{equation}\label{wkc}
	w_k(t_0) =\beta w_k(0).
\end{equation}

Let us denote $w_k(0)= h_k$. Then the unique solution to the
differential equation (\ref{wkeq}) with this initial condition has
the form $w_k(t)=h_k B_\alpha (\lambda_k, t)$ (see Lemma \ref{BazhEquation}).
Using condition
(\ref{wkc}) we obtain the following equation for unknown numbers $h_k$:
\begin{equation}\label{bkeq}
	h_k B_\alpha (\lambda_k, t_0)=\beta h_k.
\end{equation}

Let $\beta \notin (0,1)$ or $\beta \in (0,1)$, but
$K_0$ is an empty set. Then $B_\alpha (\lambda_k, t)\neq \beta$ for all $k$. Consequently, in
this case all $h_k$ are equal to zero.  Ttherefore $w_k(t)=0$, and
by virtue of completeness of the set of eigenfunctions $\{v_k\}$,
we conclude that $w(t)\equiv 0$. Thus, problem (\ref{prob1b}) in
this case has a unique solution.

Now suppose that $\beta \in (0,1)$ and $K_0$ is not empty. Then $B_\alpha (\lambda_k, t)= \beta$, $k\in
K_0$ and therefore equation (\ref{bkeq}) has the following
solution: $h_k=0$ if $k\notin K_0$ and $h_k$ is an arbitrary
number for $k\in K_0$. Thus, in this case, there is no uniqueness
of the solution to problem (\ref{prob1b}). We note that the non-local problem under consideration has a finite-dimensional kernel
\[
Ker = \{h \in H: h=\sum_{k \in K_0}h_k v_k\}.
\]
in this case.

Thus we obtain the following statement:

\begin{theorem}\label{w} Let  $ \psi \in H $.
	If $\beta \notin [0,1)$ or $\beta \in (0,1)$, but $K_0$ is empty, then problem (\ref{prob1b}) has a
	unique solution and this solution has the form (\ref{w>1}).
	If $\beta \in (0,1)$ and $K_0$ is not empty,
	then a solution to problem (\ref{prob1b}) exists if and only if the orthogonality conditions (\ref{orto}) are
	satisfied.  The solution of problem (\ref{prob1b}) has the form
	(\ref{U1}) with arbitrary coefficients $h_k$, $k\in K_0$.
	Moreover, there is a constant $ C_\beta> 0 $ such that the
	following coercive
	estimate holds:
	\begin{equation}\label{west}
		||\partial_t w(t)||^2 + ||Aw(t)||^2 + ||\partial_t^\alpha Aw(t)||^2\leq C_{\beta} t^{-2(1-\alpha)}
		||\psi||^2,\quad 0 < t \leq T.
	\end{equation}
\end{theorem}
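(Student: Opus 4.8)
The plan is to assemble Theorem~\ref{w} from the material already developed above: the resolution \eqref{bk}--\eqref{U1} of the scalar non-local problems, the partial-sum bounds \eqref{Aw}--\eqref{Dw}, and the uniqueness computation \eqref{wkeq}--\eqref{bkeq}. I would organize it into three blocks — the existence of a solution together with its explicit form, the orthogonality condition together with uniqueness, and the coercive estimate.

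For \emph{existence}, when $\beta\notin[0,1)$, and when $\beta\in(0,1)$ with $K_0$ empty, the first step is the uniform bound $|h_k|\le C_\beta|\psi_k|$, where $C_\beta=\sup_k|B_\alpha(\lambda_k,t_0)-\beta|^{-1}<\infty$. For $\beta\ge 1$ or $\beta<0$ this is immediate from the first assertion of Lemma~\ref{Bazh} (namely $0<B_\alpha(\lambda_k,t_0)<1$, recalling $t_0,\lambda_k>0$). For $\beta\in(0,1)$ with $K_0=\emptyset$ it follows by combining two facts: by the third assertion of Lemma~\ref{Bazh} one has $B_\alpha(\lambda_k,t_0)\to 0\ne\beta$, so $|B_\alpha(\lambda_k,t_0)-\beta|$ is bounded away from $0$ for all large $k$; and only finitely many indices remain — finitely many because $\{\lambda_k\}$ has no finite limit point — and none of them solves \eqref{eqbeta} since $K_0$ is empty. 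With $h_k$ defined by \eqref{bkEs1}, one sets $w(t)$ as in \eqref{w>1} and verifies, exactly as in \eqref{Aw}--\eqref{Dw}, that $w$ is a solution in the sense of Definition~\ref{def}: Parseval's equality together with the first assertion of Lemma~\ref{Bazh} gives $w\in C([0,T];H)$; the third assertion of Lemma~\ref{Bazh} gives $Aw\in C((0,T);H)$; Lemma~\ref{B't} gives $\partial_t w\in C((0,T];H)$; and since each mode satisfies $T_k'(t)=-\lambda_k(1+\gamma\partial_t^\alpha)T_k(t)$ by Lemma~\ref{BazhEquation}, summation against $v_k$ shows $\partial_t^\alpha Aw=-\gamma^{-1}(\partial_t w+Aw)\in C((0,T);H)$; the $t$-uniform majorants on each compact subinterval of $(0,T]$ justify term-by-term differentiation, so \eqref{prob1b} holds, while $w(t_0)=\beta w(0)+\psi$ by construction of the $h_k$. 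When $\beta\in(0,1)$ and $K_0\ne\emptyset$ the same reasoning applies, the extra finite sum $\sum_{k\in K_0}h_kB_\alpha(\lambda_k,t)v_k$ being a smooth $H$-valued function for every choice of the (now free) $h_k$, so \eqref{U1} is again a solution.

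For the \emph{orthogonality condition and uniqueness}, when $\beta\in(0,1)$ and $K_0\ne\emptyset$, equation \eqref{bk} with $k\in K_0$ reads $0=\psi_k$, so a solution exists if and only if the relations \eqref{orto} hold, in which case $h_k$, $k\in K_0$, are unconstrained, which is the content of \eqref{U1}. For uniqueness, the difference $w$ of two solutions solves \eqref{prob1b} with $\psi=0$; its Fourier coefficients satisfy \eqref{wkeq}--\eqref{wkc}, hence equal $h_kB_\alpha(\lambda_k,t)$ with $h_k$ constrained by \eqref{bkeq}, so $h_k=0$ whenever $B_\alpha(\lambda_k,t_0)\ne\beta$. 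In the non-degenerate case this forces $w\equiv 0$, and in the degenerate case $w$ lies in the finite-dimensional space spanned by $\{v_k:k\in K_0\}$, so the solution is unique modulo this kernel.

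For the \emph{coercive estimate}, write $h_k$ for the coefficients in \eqref{w>1} (or in \eqref{U1} with the minimal choice $h_k=0$, $k\in K_0$), so that $\sum_k|h_k|^2\le C_\beta||\psi||^2$. By Parseval's equality and the third assertion of Lemma~\ref{Bazh} in the form $\lambda B_\alpha(\lambda,t)\le C\,t^{\alpha-1}$, one gets $||Aw(t)||^2=\sum_k\lambda_k^2|h_k|^2B_\alpha(\lambda_k,t)^2\le C_\beta\,t^{-2(1-\alpha)}||\psi||^2$ on $(0,T]$; by Lemma~\ref{B't}, $||\partial_t w(t)||^2=\sum_k|h_k|^2|\partial_t B_\alpha(\lambda_k,t)|^2\le C_\beta\,\lambda_1^{-2}\,t^{-2(2-\alpha)}||\psi||^2$; and rewriting \eqref{prob1b} as $\gamma\,\partial_t^\alpha Aw(t)=-\partial_t w(t)-Aw(t)$ controls $||\partial_t^\alpha Aw(t)||$ by the previous two norms. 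Summing the three bounds yields \eqref{west}. I expect the main obstacle to be the uniform constant $C_\beta$ in the case $K_0=\emptyset$: this is precisely where both the no-finite-limit-point hypothesis on $\{\lambda_k\}$ and the decay $B_\alpha(\lambda_k,t_0)\to 0$ (i.e. the third assertion of Lemma~\ref{Bazh}, together with Lemma~\ref{Main} and Remark~\ref{K0} ensuring that $K_0$ is finite) are essential, since otherwise the denominators in \eqref{bkEs1} could accumulate at $0$ and the series need not converge; a secondary, more technical point is pinning down the sharp power of $t$ in \eqref{west}, as the $\partial_t w$ and $\partial_t^\alpha Aw$ contributions directly carry the singularity $t^{-2(2-\alpha)}$ and one must see how these combine with the $t^{-2(1-\alpha)}$ coming from $||Aw(t)||$ on the interval $(0,T]$.
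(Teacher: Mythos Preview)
Your proposal is correct and follows essentially the same route as the paper: the paper's own proof is precisely the material in \eqref{bk}--\eqref{U1}, the partial-sum bounds \eqref{Aw}--\eqref{Dw}, and the uniqueness computation \eqref{wkeq}--\eqref{bkeq}, assembled in the same order you describe. Your closing observation is also on target: the paper's intermediate estimates \eqref{Dalphaw} and \eqref{Dw} carry the singularity $t^{-2(2-\alpha)}$, not $t^{-2(1-\alpha)}$, and indeed the companion Theorem~\ref{1main} states the coercive bound with the exponent $t^{-2(2-\alpha)}$, so the exponent in \eqref{west} appears to be a typographical slip rather than something to be reconciled.
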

\noindent
Note that the proof of the coercive inequality (\ref{west})
follows from the estimates (\ref{Aw}), (\ref{Dalphaw}) and (\ref{Dw}).

Now we are ready to solve the main problem  in (\ref{probMain}). Let
$ \varphi \in H $ and $f(t) \in C ([0,T];D(A^\varepsilon))$ for
some $\varepsilon\in (0,1)$.  If we put
$\psi=\varphi-\omega(t_0)\in H$ and $\omega(t)$ and $w(t)$ are the
corresponding solutions of problems (\ref{prob1a}) and
(\ref{prob1b}), then the function $u(t)=\omega(t)+w(t)$ is a solution
to problem (\ref{probMain}). Therefore, if $\beta \notin (0,1)$ or
$\beta \in (0,1)$, but $K_0$ is empty, then
\begin{equation}\label{u}
	u(t)= \sum\limits_{k=1}^\infty \left[\frac{\varphi_k-\omega_k
		(t_0)}{B_{\alpha}(\lambda_k, t_0)-\beta}   \,
	B_{\alpha}(\lambda_k, t)+\omega_k(t)\right] v_k,
\end{equation}
where
$$
\omega_k(t)=\int\limits_{0}^tB_{\alpha}(\lambda_k, \eta)f_k(t-\eta)d\eta.
$$
The uniqueness of the solution $u(t)$ of problem  \eqref{probMain}  follows from the uniqueness
of the solutions $\omega(t)$ and $w(t)$.

If $\beta \in (0,1)$ and $K_0$ is not empty, then
\begin{equation}\label{u1}
	u(t)= \sum\limits_{k\notin K_0} \left[\frac{\varphi_k-\omega_k
		(t_0)}{B_{\alpha}(\lambda_k, t_0)-\beta}   \,
	B_{\alpha}(\lambda_k, t)+\omega_k(t)\right] v_k
	+
	\sum\limits_{k\in K_0} h_k B_{\alpha}(\lambda_k, t)v_k,
\end{equation}
where $h_k$ are arbitrary numbers.
The corresponding orthogonality conditions have the form
\begin{equation}\label{Orto}
	(\varphi,v_k)=(\omega(t_0), v_k), \,\, k\in K_0.
\end{equation}
In particular, if
\begin{equation}\label{Orto1}
	(\varphi,v_k)=0, \,\, (f(t), v_k)=0,\,\, \text{for all}\,\, 0\leq t\leq t_0,
	\,\, k\in K_0,
\end{equation}
then the orthogonality conditions (\ref{Orto}) are satisfied.

Thus we proved the following statement.

\begin{theorem}\label{1main} Let  $ \varphi \in H $ and $f(t) \in C ([0,T];D(A^\varepsilon))$ for some $\varepsilon\in (0,1)$.
	If $\beta \notin [0,1)$ or $\beta \in (0,1)$, but $K_0$ is empty, then problem (\ref{probMain}) has a
	unique solution and this solution has the form (\ref{u}).
	If $\beta \in (0,1)$ and $K_0$ is not empty,
	then we assume that the orthogonality conditions (\ref{Orto1}) are
	satisfied.  The solution of problem (\ref{probMain}) has the form
	(\ref{u1}) with arbitrary coefficients $h_k$, $k\in K_0$.
	Moreover, there are constants $ C_\beta> 0 $ and
	$C_\varepsilon>0$ such that the following coercive estimate
	holds:
	\[
	||\partial_t w(t)||^2 + ||Aw(t)||^2 + ||\partial_t^\alpha Aw(t)||^2\leq C_\beta t^{-2(2-\alpha)}
	||\varphi||^2 + C_\varepsilon\max\limits_{t\in[0,T]}
	||f||_{\varepsilon}^2,
	\]
	\[
	0 < t \leq T.
	\]
\end{theorem}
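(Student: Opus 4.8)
The plan is to build the solution of (\ref{probMain}) from the two auxiliary problems already solved, following the splitting prepared in Lemma \ref{1=2+3}. First I would set $\psi:=\varphi-\omega(t_0)$, where $\omega$ is the solution of (\ref{prob1a}) provided by Theorem \ref{Thprob1a}; since $\varphi\in H$ and $\omega\in C([0,T];H)$, the element $\psi$ lies in $H$, so Theorem \ref{w} applies to problem (\ref{prob1b}) with this right-hand side. Denoting by $w$ the resulting solution of (\ref{prob1b}), Lemma \ref{1=2+3} gives that $u:=\omega+w$ solves (\ref{probMain}); the regularity demanded by Definition \ref{def}, namely $u\in C([0,T];H)$ and $\partial_t u, Au, \partial_t^\alpha Au\in C((0,T);H)$, is inherited term by term from the corresponding properties of $\omega$ (Theorem \ref{Thprob1a}) and of $w$ (Theorem \ref{w}). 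Substituting the series (\ref{v_fi0}) for $\omega$ and the series (\ref{w>1}) (or (\ref{U1})) for $w$, with Fourier coefficients $\psi_k=\varphi_k-\omega_k(t_0)$ and $\omega_k(t_0)=\int_0^{t_0}B_\alpha(\lambda_k,\eta)f_k(t_0-\eta)\,d\eta$, yields formula (\ref{u}) in the first case and (\ref{u1}) in the second.

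Next I would translate the dichotomy of Theorem \ref{w} into conditions on the original data $\varphi$ and $f$. If $\beta\notin[0,1)$, or $\beta\in(0,1)$ with $K_0=\emptyset$, Theorem \ref{w} produces a unique $w$, and uniqueness of $u$ follows since the difference of two solutions of (\ref{probMain}) with the same $\varphi$ and $f$ satisfies the homogeneous problem $\partial_t u+(1+\gamma\partial_t^\alpha)Au=0$, $u(t_0)=\beta u(0)$, which is precisely problem (\ref{prob1b}) with $\psi=0$ and hence, by Theorem \ref{w}, admits only the zero solution in these cases. If $\beta\in(0,1)$ and $K_0\neq\emptyset$, the solvability condition (\ref{orto}) of Theorem \ref{w} reads $\psi_k=(\varphi,v_k)-(\omega(t_0),v_k)=0$ for $k\in K_0$, that is condition (\ref{Orto}); I would then observe that the hypotheses (\ref{Orto1}), i.e. $(\varphi,v_k)=0$ and $(f(t),v_k)=0$ for $0\le t\le t_0$ and $k\in K_0$, force $\omega_k(t_0)=\int_0^{t_0}B_\alpha(\lambda_k,t_0-\tau)f_k(\tau)\,d\tau=0$, so that (\ref{Orto}) holds and the solution has the form (\ref{u1}) with the coefficients $h_k$, $k\in K_0$, arbitrary (the corresponding kernel being finite-dimensional, as noted earlier).

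Finally, the coercive estimate comes from the inequality $||a+b||^2\le 2||a||^2+2||b||^2$ applied to $\partial_t u=\partial_t\omega+\partial_t w$, to $Au=A\omega+Aw$, and to $\partial_t^\alpha Au=\partial_t^\alpha A\omega+\partial_t^\alpha Aw$: the $w$-terms are controlled by estimate (\ref{west}) of Theorem \ref{w}, the $\omega$-terms by estimate (\ref{estimatev_fi0}) of Theorem \ref{Thprob1a}, and the missing quantity $||A\omega(t)||$ is handled by rewriting equation (\ref{prob1a}) as $A\omega=f-\partial_t\omega-\gamma\partial_t^\alpha A\omega$ and using $||f(t)||\le c\,||f(t)||_\varepsilon$ (valid since $\lambda_k\ge\lambda_1>0$ and only finitely many $\lambda_k$ lie below $1$). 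The only extra ingredient is a bound for $||\psi||$: from $\psi=\varphi-\omega(t_0)$, the estimate $||\omega(t_0)||\le\lambda_1^{-1}||A\omega(t_0)||$, and the bound for $||A\omega(t_0)||$ just described, one gets $||\psi||^2\le 2||\varphi||^2+c\,\max_{t\in[0,T]}||f||_\varepsilon^2$; inserting this into (\ref{west}) and collecting the $\omega$-contribution gives the stated estimate. I do not anticipate a serious obstacle: the theorem is essentially a corollary of Theorems \ref{Thprob1a} and \ref{w} joined through the splitting of Lemma \ref{1=2+3}, and the one step that genuinely requires care is the passage from the concrete orthogonality conditions (\ref{Orto1}) on $\varphi$ and $f$ to the abstract condition $\psi_k=0$, $k\in K_0$, appearing in Theorem \ref{w}.
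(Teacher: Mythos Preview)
Your proposal is correct and follows essentially the same route as the paper: set $\psi=\varphi-\omega(t_0)$, invoke Theorem~\ref{Thprob1a} and Theorem~\ref{w}, combine via Lemma~\ref{1=2+3} to obtain (\ref{u}) or (\ref{u1}), and observe that the hypotheses (\ref{Orto1}) force (\ref{Orto}). Your treatment of uniqueness (reducing the difference of two solutions to the homogeneous version of (\ref{prob1b})) and of the coercive estimate (splitting $u=\omega+w$, bounding $||\psi||$ in terms of $||\varphi||$ and $\max_t||f||_\varepsilon$, and recovering $||A\omega||$ from the equation) is spelled out in more detail than the paper gives, but the underlying argument is the same.
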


Note that the validity of the assertions in Theorem \ref{w} and Theorem \ref{1main} requires that the orthogonality conditions (\ref{orto}) and (\ref{Orto1}) be satisfied, respectively. In light of these conditions a natural question arises: how restrictive are these orthogonality conditions. To answer this question, consider the following example.

Let a bounded domain
$\Omega\subset \mathbb{R}^N$ have sufficiently smooth
boundary $\partial \Omega$. Consider the operator $A_0$, defined in $L_2(\Omega)$ with domain of definition $D(A_0)=\{f\in
C^2(\Omega)\cap C(\overline{\Omega}):\, f(x)=0,\, x\in \partial
\Omega\}$ and acting as $A_0f(x)=-\triangle f(x)$. Then, as is known (see, e.g.
\cite{Il}), $A_0$ has a complete in $L_2(\Omega)$ system of
orthonormal eigenfunctions $\{v_k(x)\}$ and a countable set of
nonnegative eigenvalues $\lambda_k$ ($\rightarrow +\infty$), and
$\lambda_1 = \lambda_1(\Omega)>0$.

Denote by $A$ the operator, acting as $Af(x) =\sum \lambda_k
f_k v_k(x)$ with the domain of definition $D(A)=\{ f\in
L_2(\Omega): \,\sum \lambda^2_k f^2_k<\infty\}$. Then it is not
hard to verify, that $A$ is a positive self-adjoint extension in
$L_2(\Omega)$ of operator $A_0$. Therefore, one can apply Theorems
\ref{w}
and \ref{1main} to the problem:
\begin{equation}\label{prob3}
	\left\{
	\begin{aligned}
		&\partial_t w(x,t) -(1+\gamma \partial_t^\alpha)\triangle w(x,t) = 0, \quad x\in
		\Omega, \quad
		0<t\leq T, \,\, 0<\alpha<1;\\
		&w(x,t_0) =\beta w(x,0) +\psi(x), \,   \, x\in
		\Omega,
		\quad 0<t_0\leq T;\\
		&w(x,t) = 0, \quad  x\in
		\partial \Omega, \quad 0<t\leq T,
	\end{aligned}
	\right.
\end{equation}

Suppose $\beta\in (0,1)$ and $t_0\in (0, T]$ satisfies conditions of Lemma \ref{mainT0}. Then, according to Lemma \ref{mainT0}, only one eigenvalue can satisfy equation (\ref{eqbeta}). Let this number be $\lambda_1$, i.e.
$
B_\alpha(\lambda_1, t_0)=\beta.
$
We note also that the multiplicity $\lambda_1$ is equal to one.

Therefore, applying Theorem \ref{w} we have that
problem (\ref{prob3}) has a solution for any function $\psi\in L_2(\Omega)$, if and only if
\[
\psi_1=\int\limits_\Omega \psi(x) v_1(x) dx=0.
\]
In other words the first Fourier coefficient  of $\psi(x)$ must be zero.
In this case, the solution of the problem is not unique and all solutions can be represented in the series form
\[
w(t)=\sum\limits_{k=2}^\infty \frac{\psi_k}{B_\alpha (\lambda_k, t_0)-\beta} B_\alpha (\lambda_k, t)v_k+ h B_\alpha (\lambda_1, t)v_1,
\]
that converges in the norm of $L_2(\Omega)$  uniformly in $t\in [0,T].$ Here $h$ is an arbitrary real number.

\section{Conclusion}

In this paper, for the Rayleigh-Stokes equation, we study a new time-nonlocal problem, i.e., in the problem (\ref{probIN}), instead of the initial condition $u(x,0)=\varphi(x )$, we consider the nonlocal condition $ u(x, t_0)=\beta u(x, 0) +\varphi(x)$, $0< t_0\leq T$. Instead of the Laplace operator $(-\Delta)$, we take an arbitrary positive self-adjoint operator $A$, and the naturally obtained results are also valid for an equation with a Laplace operator with the Dirichlet condition.

Cases $\beta=0$ and $\beta=1$ were studied earlier: if $\beta=0$, then we get a well-known time backward problem that has a unique solution, but the solution is not stable. If $\beta=1$, then the problem becomes "good", i.e. there is a unique solution and it is stable (see \cite{AshurovVaisova}).

Naturally, the question arises: starting from what value $\beta$ does the task deteriorate? This paper provides a comprehensive answer to this question. It turns out that the critical values of the parameter $\beta$ lie on the half-interval $[0,1)$. If $\beta\notin [0,1)$, then the problem is well-posed according to Hadamard: there is a unique solution and it continuously depends on the data of the problem; if $\beta\in (0,1)$ (case $\beta=0$ is considered in \cite{AshurovVaisova}), then the well-posedness of the problem depends on the location of the eigenvalues of the Laplace operator, namely, if the set $K_0$, defined above, is empty, then the problem is again well-posed according to Hadamard . If $K_0$ is not empty, then necessary and sufficient conditions are found in the paper, the fulfillment of which guarantees the existence of a solution, but in this case the solution is not unique.


\begin{thebibliography}{99}
 \normalsize


\bibitem{KilSriTru} {\sc A.~Kilbas, H.~Srivastava, J.~Trujillo,} {\it Theory and Applications of Fractional Differential Equations.} Elsevier, Amsterdam, 2006.
	
\bibitem{Kulish} {\sc V.~Kulish, J.~Luis Lage,} {\it Application of fractional calculus to fluid mechanics,} J. Fluids Engg. 124, No 3, 413-442, 2002.
	
\bibitem{Debnath} {\sc L.~Debnath,} {\it Recent applications of fractional calculus to science and engineering,}  Int. J. Math. Sci. 203, No 54, 3413-3442, 2003.
	
\bibitem{Bazh} {\sc E.~Bazhlekova, B.~Jin, R.~Lazarov, Z.~Zhou,} {\it An analysis of the Rayleigh-Stokes problem for a generalized second-grade
fluid.}  Numer. Math. 131, 1-31, 2015.

\bibitem{Chud} {\sc A.~
F.~Chudnovsky,}  {\it Thermal physics of soils.} Nauka, 1976. [In Russian].
	
\bibitem{Nakh1} {\sc A.~M.~Nakhushev,}  {\it Problems with displacement for partial differential equations.} Nauka, 2006. [In Russian].
	
\bibitem{Nakh2} {\sc A.~M.~Nakhushev,}  {\it Loaded equations and their application.} Nauka, 2012. [In Russian].
	
\bibitem{Tan1}	{\sc W.~C.~Tan, T.~Masuoka,} {\it Stokes' first problem for a second grade fluid in a porous half-space with heated boundary.}  Int. J. Non-Linear Mech., 40, 515-522, 2005.

\bibitem{Tan2} {\sc W.~C.~Tan, T.~ Masuoka,} {\it Stokes' first problem for an Oldroyd-B fluid in a porous half-space.}  Phys. Fluid, 17,  023101-7, 2005.
	
\bibitem{Fet} {\sc C.~Fetecau, M.~Jamil, C.~Fetecau, D.~Vieru,} {\it The Rayleigh-Stokes problem for an edge in a generalized Oldroyd-B fluid.} Z. Angew. Math. Phys., 60, No 5, 921-933, 2009.
	
	
\bibitem{Shen} {\sc F.~Shen, W.~Tan, Y.~Zhao, T.~Masuoka,} {\it The Rayleigh-Stokes problem for a heated generalized second grade fluid with fractional derivative model.} Nonlinear Anal. Real World Appl., 7, No 5,  1072-1080, 2006.
	
	
\bibitem{Zhao} {\sc C.~Zhao, C.~Yang,} {\it Exact solutions for electro-osmotic flow of viscoelastic fluids in rectangular
	micro-channels.}  Appl. Math. Comp., 211, No 2, 502-509, 2009.
	
	
\bibitem{Le} {\sc L.~D.~Long, B.~Moradi, O.~Nikan, Z.~Avazzadeh, A.~M.~Lopes,}  {\it Numerical Approximation of the Fractional Rayleigh-Stokes Problem Arising in a Generalised Maxwell Fluid.}  Fractal Fract., 6,  377, 2022.
	
	
\bibitem{Dai} {\sc D.~D.~Dai, T.~T.~Ban,  Y.~L.~Wang,  W.~Zhang,  T.~Hang,}  {\it The piecewise reproducing kernel method for the time variable fractional order
	advection-reaction-diffusion equations.} Thermal science, 25, 1261-1268, 2021.
	
\bibitem{Tran1} {\sc T.~T.~Binh,  D.~Baleanu, N.~H.~Luc, N.~Can,} {\it Determination of source term for the fractional Rayleigh-Stokes equation with
	random data.} Journal of Inequalities and Applications, 308,  1-16, 2019.
	
\bibitem{Tran2} {\sc T.~T.~Binh, H.~K.~Nashine,  L.~D.~Long, N.~H.~Luc,  N.~Can,} {\it Identification of source term for the ill-posed
	Rayleigh-Stokes problem by Tikhonov
	regularization method.} Advances in Difference Equations,  331, 1-20, 2019.
	
	
\bibitem{Duc} {\sc P.~N.~Duc, H.~D.~Binh,  L.~D.~Long,  T.~V.~Kim,}
	{\it Reconstructing the right-hand side of the
	Rayleigh-Stokes problem with non-local in
	time condition.}  Advances in Difference Equations, 470,  1-18, 2021.
	
\bibitem{AshurovVaisova} {\sc R.~Ashurov, N.~Vaisova,} {\it Backward and Non-Local Problems for the Rayleigh-Stokes Equation.}  Fractal Fract., 6, No 10,  587, 2022.
	
\bibitem{Kirane_f_1} {\sc M.~Kirane, A.~S.~Malik,}   {\it Determination of an unknown source term and the temperature distribution for the linear heat equation involving fractional derivative in time.}  Applied Mathematics and Computation, 218, 163-170, 2011.
	
\bibitem{Kirane_f_2} {\sc M.~Kirane, B.~Samet, B.~T.~Torebek,}  {\it Determination of an unknown source term and the temperature distribution for the subdiffusion equation at the initial and final data.} Electronic Journal of Differential Equations, 217, 1-13, 2017.
	
\bibitem{AshMuk} {\sc R.~R.~Ashurov, A.~T.~Mukhiddinova,}  {\it Inverse problem of determining the heat source density for the subdiffusion equation.} Differential Equations, 56, 1550-1563, 2020.
	
	
	
\bibitem{Luc1} {\sc H.~L.~Nguyen,  H.~T.~Nguyen, K.~Mokhtar, X.~T.~Duong Dang,}
	{\it Identifying initial condition of the Rayleigh-Stokes problem
	with random noise.} Wiley, 6,  1-11, 2018.
	
	
	
\bibitem{Luc2} {\sc H.~L.~ Nguyen, L.~N.~Huynh, O.~D.~Regan,  N.~H.~Can,}  {\it Regularization of the fractional Rayleigh-Stokes equation using a fractional
	Landweber method.} Advances in Difference Equations,  459,  1-23, 2020.
	
\bibitem{Ashyr} {\sc A.~O.~ Ashyralyev,}  {\it Nonlocal boundary-value problems for abstract parabolic equations: well-posedness in Bochner spaces.}  Journal of Evolution Equations, 6, 1-28, 2006.
\bibitem{AshyrSob} {\sc A.~O.~ Ashyralyev,   A.~ Hanalyev, P.~E.~Sobolevskii,} {\it Coercive solvability of nonlocal boundary value problem for parabolic equations.}  Abstract and Applied Analysis,  6, 53-61, 2001.
	
\bibitem{AshyrSob1} {\sc A.~O.~ Ashyralyev,  P.~E.~Sobolevskii,} {\it Coercive stability of a multidimensional difference elliptic equation of 2m-th order with variable coefficients.}  Investigations in the Theory of Differential Equations,  Minvuz Turkmen. SSR, Ashkhabad, 31-43, 1987. [in Russian].
	
	
\bibitem{AF2022} {\sc R.~R.~Ashurov, Yu.~Fayziev,} {\it On the nonlocal problems in time for time-fractional subdiffusion equations.} Fractal and Fractional , 6, No 41, 1-21, 2022.
	
\bibitem{AF_1_2022} {\sc R.~R.~ Ashurov, Yu.~Fayziev,}  {\it On the nonlocal problems in time for subdiffusion equations with the Riemann-Liouville derivatives.}  Bulletin of the Karaganda University, 106, No 2, 18-37, 2022.
	
\bibitem{Tran} {\sc T.~T.~Phong,  L.~D.~Long,} {\it  Well-posed results for nonlocal fractional parabolic equation involving Caputo-Fabrizio operator.}  J. Math. Computer Sci., 26, 357-367,2022.
	
\bibitem{Liz} {\sc C.~Lizama,} {\it Abstract linear fractional evolution equations.}  Handbook of Fractional Calculus with Applications, 2, 465-497, 2019.
	
\bibitem{Pskhu} {\sc A.~V.~Pskhu,}  {\it Initial problem for linear ordinary differential
	fractional order equations.} Mat. Sb.,  4,  111-122, 2011. [In Russian].
	
	
\bibitem{Il}{\sc V.~A.~Il'in,} {\it On the solvability of mixed problems for hyperbolic and parabolic equations,}  Math. Surveys, 15, 85-142, 1960.[in Russian] 
	
	


\end{thebibliography}
\end{document}